\newtheorem{theorem}{Theorem}[section]
\newtheorem{lemma}[theorem]{Lemma}
\newtheorem{proposition}[theorem]{Proposition}
\newtheorem{corollary}[theorem]{Corollary}
\theoremstyle{definition}
\newtheorem{definition}[theorem]{Definition}
\newtheorem{example}[theorem]{Example}
\theoremstyle{remark}
\numberwithin{equation}{section}
\begin{document}

\setcounter{page}{1}

\title[Integral $K$-Operator Frames for $\mathcal{B}(H)$]{Integral $K$-Operator Frames for $\mathcal{B}(H)$}

\author[ H. LABRIGUI, M. ROSSAFI, A. Touri and S. KABBAJ]{H. LABRIGUI$^1$$^{*}$, M. ROSSAFI$^1$, A. TOURI$^1$ \MakeLowercase{and} S. KABBAJ$^1$}

\address{$^{1}$Department of Mathematics, University of Ibn Tofail, B.P. 133, Kenitra, Morocco}
\email{\textcolor[rgb]{0.00,0.00,0.84}{hlabrigui75@gmail.com, rossafimohamed@gmail.com, touri.abdo68@gmail.com, Samkabbaj@yahoo.fr}}

\subjclass[2010]{42C15, 46L06}

\keywords{Integral $K$-operator frame, integral operator Reisz basis frames, dual of integral Riesz basis , Hilbert space.}

\date{
\newline \indent $^{*}$Corresponding author}
\maketitle
	
	\begin{abstract}
		In this paper, we will introduce a new notion, that of $K$-Integral operator frames in the set of all bounded linear operators noted $\mathcal{B}(H)$, where $H$ is a separable Hilbert space. Also, we prove some results of integral $K$-operator frame. Lastly we will establish some new properties for the perturbation and stability for an integral $K$-operator frames for $\mathcal{B}(H)$. 
	\end{abstract}
	\section{Introduction and preliminary}
	
	The notion of frames in Hilbert spaces has been introduced by Duffin and Schaeffer \cite{8} in 1952 to study some deep problems in nonharmonic Fourier
	series, after the fundamental paper \cite{10} by Daubechies, Grossman and Meyer, frames theory began to be widely used, particularly in the more specialized context of wavelet frames and Gabor frames \cite{9}. The theory of frames has been applied to signal processing, image processing, data compressing and so on.\\
	This last decade have seen tremendous activity in the development of frame theory and many generalizations of frames have come into existence. Theory of frames have been extended from Hilbert spaces to Hilbert  $C^{\ast}$-modules \cite{r6, rk11,r2, r3,r8,r4, r5,r7, r9, Ch, kr}.\\
	Some generalizations of frames for Hilbert spaces have been introduced and studied especially operator frame. The concept of operator frame for  the space $\mathcal{B}(H)$ of all bounded linear operators on Hilbert space $H$ was introduced by Chun-Yan Li and Huai-Xin Cao \cite{1}.\\
	Motivated by the work of Chander Shekhar and Shiv Kumar Kaushik \cite{01}, we introduce and study a new notion that of " Integral $K$-operator frame for $\mathcal{B}(H)$". Also we give some new properties.\\
	In what follows, we set H a separable Hilbert space and $\mathcal{B}(H)$ the set of all bounded linear operators from H to H and $(\Omega,\mu)$ is a measure space with positive measure $\mu$.\\
	Let $K,T \in \mathcal{B}(H) $, if $TK=I$, then $T$ is called the left inverse of $K$, denoted by $K_{l}^{-1}$.\\
	If $KT=I$, then $T$ is called the right inverse of $K$ and we write $K_{r}^{-1}=T$.\\
	If $KT=TK=I$, then $T$ and $K$ are inverse of each other.\\
	For a separable Hilbert space $H$ and a measurable space $(\Omega,\mu)$, define,
	\begin{equation*}
		l^{2}(\Omega,H)=\{x_{\omega} \in H,\quad  \omega \in \Omega,\quad \left\|\int_{\Omega}\langle x_{\omega},x_{\omega}\rangle d\mu(\omega)\right\| < \infty  \}.
	\end{equation*}
	For any $x=(x_{\omega})_{\omega \in \Omega}$ and $y=(y_{\omega})_{\omega \in \Omega}$, the inner product on $l^{2}(\Omega,H)$ is defined by, 
	\begin{equation*}
		\langle x,y\rangle = \int_{\Omega}\langle x_{\omega},y_{\omega}\rangle d\mu(\omega).
	\end{equation*}
	And the norme is defined by $\|x\|=\langle x,x\rangle^{\frac{1}{2}}$.
	\begin{definition}
		A sequence $\{\alpha_{i}\}_{i\in I} \subset \mathbb{R}$ is said to be positively confined if:
		\begin{equation*}
			0<\underset{i\in I}{\inf}\alpha_{i} <\underset{i\in I}{\sup}\alpha_{i}<\infty
		\end{equation*}
		Where $I$ is a finite or countable index subset.
	\end{definition}
	\begin{theorem}\cite{66}\label{t1}
		Let $H$ be a Hilbert space and $T, K \in \mathcal{B}(H)$. Then the following statements are equivalent:
		\begin{itemize}
			\item [1.] $\mathcal{R}(K) \subseteq \mathcal{R}(T)$.
			\item [2.] $KK^{\ast} \leq \lambda^{2}TT^{\ast}$, for some $0<\lambda $.
			\item [3.] $K=TQ$, for some $Q\in \mathcal{B}(H)$.
		\end{itemize}
	\end{theorem}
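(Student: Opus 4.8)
The plan is to prove the two trivial implications $(3)\Rightarrow(1)$ and $(3)\Rightarrow(2)$ together with the two substantive ones $(1)\Rightarrow(3)$ and $(2)\Rightarrow(3)$; since statement $(3)$ yields both $(1)$ and $(2)$, this closes the chain of equivalences. For $(3)\Rightarrow(1)$ one just observes that $K=TQ$ forces $Kx=T(Qx)\in\mathcal{R}(T)$ for every $x$, so $\mathcal{R}(K)\subseteq\mathcal{R}(T)$. For $(3)\Rightarrow(2)$, from $K=TQ$ we get $KK^{\ast}=TQQ^{\ast}T^{\ast}$, and since $QQ^{\ast}\le\|Q\|^{2}I$ this gives $KK^{\ast}\le\|Q\|^{2}TT^{\ast}$; taking $\lambda=\|Q\|$ when $Q\neq 0$ and any $\lambda>0$ otherwise does the job.

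The core of the argument is $(2)\Rightarrow(3)$, and this is the step I expect to demand the most care. Assuming $KK^{\ast}\le\lambda^{2}TT^{\ast}$, the key move is to turn the operator inequality into a pointwise norm estimate: for each $x\in H$,
\begin{equation*}
\|K^{\ast}x\|^{2}=\langle KK^{\ast}x,x\rangle\le\lambda^{2}\langle TT^{\ast}x,x\rangle=\lambda^{2}\|T^{\ast}x\|^{2}.
\end{equation*}
This shows that the assignment $T^{\ast}x\longmapsto K^{\ast}x$ is well defined on the (not necessarily closed) subspace $\mathcal{R}(T^{\ast})$ — if $T^{\ast}x=T^{\ast}y$ then $\|K^{\ast}x-K^{\ast}y\|\le\lambda\|T^{\ast}x-T^{\ast}y\|=0$ — and that it is linear and bounded there with norm at most $\lambda$. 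I would then extend it by continuity to $\overline{\mathcal{R}(T^{\ast})}$ and set it equal to $0$ on $\overline{\mathcal{R}(T^{\ast})}^{\perp}=\ker T$, obtaining an operator $S\in\mathcal{B}(H)$ with $ST^{\ast}=K^{\ast}$. Passing to adjoints gives $K=TS^{\ast}$, so $Q:=S^{\ast}$ is the desired factor. The delicate points here are precisely the well-definedness and the continuity of $S$ on $\mathcal{R}(T^{\ast})$; everything else is bookkeeping about the extension.

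Finally, for $(1)\Rightarrow(3)$ I would argue as follows. Since $\mathcal{R}(K)\subseteq\mathcal{R}(T)$, for each $x\in H$ the equation $Ty=Kx$ has a solution, and it has a unique solution $Qx$ lying in $(\ker T)^{\perp}=\overline{\mathcal{R}(T^{\ast})}$ (the minimal-norm solution). Uniqueness of that solution forces the map $x\mapsto Qx$ to be linear, and $TQ=K$ by construction, so it remains only to check that $Q$ is bounded. For this I would invoke the closed graph theorem: if $x_{n}\to x$ and $Qx_{n}\to z$, then $Kx_{n}=TQx_{n}\to Tz$ while also $Kx_{n}\to Kx$, hence $Tz=Kx$; and $z\in(\ker T)^{\perp}$ because that subspace is closed, so $z=Qx$ by uniqueness. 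Thus the graph of $Q$ is closed, $Q\in\mathcal{B}(H)$, and $K=TQ$, which together with the implications above establishes the equivalence of $(1)$, $(2)$ and $(3)$.
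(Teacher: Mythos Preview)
Your proof is correct and follows the standard argument for Douglas' factorization theorem. However, there is nothing to compare it against: the paper does not supply its own proof of this statement. Theorem~\ref{t1} is quoted from the reference \cite{66} (Douglas, \emph{Proc.\ Amer.\ Math.\ Soc.}\ 1966) and is used as a black box in the subsequent results; no proof appears in the paper itself. Your argument---defining $S$ on $\mathcal{R}(T^{\ast})$ via the norm inequality $\|K^{\ast}x\|\le\lambda\|T^{\ast}x\|$, extending by density, and invoking the closed graph theorem for $(1)\Rightarrow(3)$---is essentially Douglas' original proof, so in that sense it matches the cited source even though the paper under review contributes nothing here.
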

	\begin{lemma} \label{33} \cite{33}.
		Let $(\Omega,\mu )$ be a measure space, $X$ and $Y$ are two Banach spaces, $\lambda : X\longrightarrow Y$ be a bounded linear operator and $f : \Omega\longrightarrow X$ measurable function; then, 
		\begin{equation*}
			\lambda (\int_{\Omega}fd\mu)=\int_{\Omega}(\lambda f)d\mu.
		\end{equation*}
	\end{lemma}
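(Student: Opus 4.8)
The plan is to reduce the asserted identity to the case of simple functions, where it is an immediate consequence of the linearity of $\lambda$, and then to pass to the limit using the boundedness of $\lambda$ together with the defining inequality $\left\|\int_{\Omega} g\, d\mu\right\| \leq \int_{\Omega}\|g\|\, d\mu$ for Bochner integrals. Throughout I interpret $\int_{\Omega} f\, d\mu$ as a Bochner integral, which presupposes that $f$ is Bochner integrable; this is the natural reading, since otherwise the statement is vacuous.

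First I would invoke the standard approximation theorem: a Bochner integrable $f : \Omega \longrightarrow X$ admits a sequence of simple functions $s_{n} = \sum_{k=1}^{m_{n}} \chi_{E_{n,k}}\, x_{n,k}$, with $\mu(E_{n,k}) < \infty$, such that $\int_{\Omega}\|f - s_{n}\|\, d\mu \to 0$. For each fixed $n$, linearity of $\lambda$ gives
\[
\lambda\Big(\int_{\Omega} s_{n}\, d\mu\Big) = \lambda\Big(\sum_{k=1}^{m_{n}} \mu(E_{n,k})\, x_{n,k}\Big) = \sum_{k=1}^{m_{n}} \mu(E_{n,k})\, \lambda(x_{n,k}) = \int_{\Omega} (\lambda \circ s_{n})\, d\mu,
\]
so the identity holds for simple functions. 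Next I would check that $\lambda \circ f$ is Bochner integrable: it is measurable, being the composition of the measurable $f$ with the continuous $\lambda$, and the pointwise bound $\|\lambda \circ f\| \leq \|\lambda\|\, \|f\|$ makes it integrable. Moreover $\int_{\Omega}\|\lambda \circ f - \lambda \circ s_{n}\|\, d\mu \leq \|\lambda\| \int_{\Omega}\|f - s_{n}\|\, d\mu \to 0$, hence $\int_{\Omega}(\lambda \circ s_{n})\, d\mu \to \int_{\Omega}(\lambda \circ f)\, d\mu$ in $Y$. On the other hand $\left\|\int_{\Omega} s_{n}\, d\mu - \int_{\Omega} f\, d\mu\right\| \leq \int_{\Omega}\|s_{n} - f\|\, d\mu \to 0$, so $\int_{\Omega} s_{n}\, d\mu \to \int_{\Omega} f\, d\mu$ in $X$, and continuity of $\lambda$ gives $\lambda\big(\int_{\Omega} s_{n}\, d\mu\big) \to \lambda\big(\int_{\Omega} f\, d\mu\big)$. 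Passing to the limit in the displayed identity yields $\lambda\big(\int_{\Omega} f\, d\mu\big) = \int_{\Omega}(\lambda \circ f)\, d\mu$.

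An alternative, shorter route is by duality: for every $y^{\ast} \in Y^{\ast}$ the functional $y^{\ast} \circ \lambda$ belongs to $X^{\ast}$, and the elementary commutation of the Bochner integral with bounded scalar functionals gives $y^{\ast}\big(\lambda(\int_{\Omega} f\, d\mu)\big) = (y^{\ast}\circ\lambda)\big(\int_{\Omega} f\, d\mu\big) = \int_{\Omega}(y^{\ast}\circ\lambda)(f)\, d\mu = y^{\ast}\big(\int_{\Omega}(\lambda \circ f)\, d\mu\big)$; since $Y^{\ast}$ separates the points of $Y$, the two vectors coincide. Neither argument hides a genuine difficulty; the only point that really deserves attention is confirming that $\lambda \circ f$ is again Bochner integrable, which the pointwise estimate $\|\lambda \circ f\| \leq \|\lambda\|\,\|f\|$ settles at once. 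I would present the simple-function computation as the main proof, since it is self-contained given the construction of the Bochner integral, and mention the duality argument only as a remark.
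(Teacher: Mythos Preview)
Your argument is correct and is the standard proof of this fact about Bochner integrals: approximate by simple functions, verify the identity there by linearity, and pass to the limit using continuity of $\lambda$; the duality variant is equally valid. There is nothing to compare against, however, because the paper does not supply a proof of this lemma at all --- it is merely quoted, with a citation to Yosida's \emph{Functional Analysis}, as a preliminary tool used later (in the proof of Theorem~2.4). So your write-up goes beyond what the paper does; if anything, you might simply cite the reference as the authors do, or keep your self-contained argument as a service to the reader.
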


	\section{Integral $K$-operator frame}
	\begin{definition}
		Let $K \in \mathcal{B}(H) $, and $\Lambda=\{\Lambda_{\omega} \in \mathcal{B}(H), \quad \omega \in \Omega \} $, the family $\Lambda$  is said an integral $K$-operator frame for $\mathcal{B}(H)$, if there exists a positive constants $0<A, B <\infty $ such that :
		\begin{equation}\label{d1}
			A\|K^{\ast}x\|^{2}\leq \int_{\Omega}\|\Lambda_{\omega}x\|^{2}d\mu (w)\leq B\|x\|^{2} \qquad x\in H,
		\end{equation}
		where $A$ and $B$ are called lower and upper bounds for the integral $K$-operator frame $\Lambda$, respectively.\\
		An integral $K$-operator frame $\{\Lambda_{\omega}\}_{\omega \in \Omega} $ is said to be tight if there exists a constant $0<A$ such that, 
		\begin{equation}
			A\|K^{\ast}x\|^{2}= \int_{\Omega}\|\Lambda_{\omega}x\|^{2}d\mu (w)\qquad x\in H.
		\end{equation}
		It is called a parseval integral $K$-operator frame if $A=1$ in last inequality.\\
		If only upper inequality of \eqref{d1} holds, then the family  $\{\Lambda_{\omega}\}_{\omega \in \Omega} $ is called an integral $K$-operator Bessel family for $\mathcal{B}(H)$.
	\end{definition}
	\begin{example}
		Let $H$ a Hilbert space defined by:\\
		$H=\left\{ A=\left( 
		\begin{array}{ccc}
		a & 0  \\ 
		0 & b 
		\end{array}%
		\right) \text{ / }a,b\in 
		\mathbb{C}
		\right\} $, \\
		We define the inner product :
		\[
		\begin{array}{ccc}
		H\times H & \rightarrow  & \mathbb{C} \\ 
		(A,B) & \mapsto  & \langle A, B\rangle = a\bar{a_{1}} + b\bar{b_{1}}%
		\end{array}%
		\]\\
		where 
		$ A=\left( 
		\begin{array}{ccc}
		a & 0  \\ 
		0 & b 
		\end{array}%
		\right)$ and 
		$B=\left( 
		\begin{array}{ccc}
		a_{1} & 0  \\ 
		0 & b_{1} 
		\end{array}%
		\right)$\\
		So, we have : $	\|A\|=\sqrt{|a|^{2}+ |b|^{2}} $\\
		Now, we consider a measure space $(\Omega=\left[ 0,1\right] ,d\lambda ) $, whose $d\lambda  $ is a Lebesgue measure  restraint on the interval $\left[ 0,1\right] $.\\
		For all $w \in \left[ 0,1\right] $, we define :
		\begin{align*}
			\Lambda_{\omega} : H &\longrightarrow H\\
			A&\longrightarrow \left(   \begin{array}{ccc}
				w & 0  \\ 
				0 & \frac{w}{2} 
			\end{array}
			\right)A
		\end{align*}
		It is clear that the family $\{\Lambda_{\omega}\}_{w\in \left[ 0,1\right]}$ be a family of continuous operator on $H$.\\
		Moreover, we have :
		\begin{align*}
			\|\Lambda_{\omega}(A)\|^{2}&=\langle \Lambda_{\omega}A, \Lambda_{\omega}A\rangle\\
			&=w^{2}|a|^{2} + \frac{w^{2}}{4}|b|^{2}
		\end{align*}
		Hence,
		\begin{equation*}
			\int_{\Omega}\|\Lambda_{\omega}(A)\|^{2}d\lambda(w)=|a|^{2}\int_{\Omega}w^{2}d\lambda(w) + |b|^{2}\int_{\Omega}\frac{w^{2}}{4}d\lambda(w).
		\end{equation*}
		For all $\lambda \in \left[ 0,1\right] $, we define :
		\begin{align*}
			K : H &\longrightarrow H\\
			A&\longrightarrow \left(   \begin{array}{ccc}
				\frac{\lambda}{2}  & 0  \\ 
				0 & \frac{\lambda}{4} 
			\end{array}
			\right)A
		\end{align*}
		So,
		\begin{equation*}
			\|K^{\ast}A\|^{2}=\frac{|\lambda|^{2}}{4}|a|^{2} + \frac{|\lambda|^{2}}{16}|b|^{2}
		\end{equation*} 
		We conclude that ,
		\begin{equation*}
			\|K^{\ast}A\|^{2}\leq \int_{\Omega}\|\Lambda_{w}(A)\|^{2}d\lambda(w) \leq \frac{1}{3} \|A\|^{2}
		\end{equation*}
		Which shows that  $\{\Lambda_{w}\}_{w\in \left[ 0,1\right]}$ is a integral $K$-operator frame for $\mathcal{B}(H)$.
	\end{example}
	Let $\{\Lambda_{\omega}\}_{\omega \in \Omega} $ be an integral $K$-operator frame for $\mathcal{B}(H) $, we define an operator: 
	\begin{align*}
		R  :  H  &\longrightarrow  l^{2}(\Omega,H) \\
		x  &\longrightarrow  R(x)=\{\Lambda_{\omega}x\}_{\omega \in \Omega}.\\
	\end{align*}
	It easy to show that the $R$ is a linear and bounded operator, called the analysis operator of the integral $K$-operator frame $\{\Lambda_{\omega}\}_{\omega \in \Omega} $.\\
	The adjoint of the analysis operator $R$ is defined by :
	\begin{align*}
		R^{\ast}  :  l^{2}(\Omega,H)  &\longrightarrow  H \\
		\{x_{\omega}\}_{\omega \in \Omega}  &\longrightarrow  R^{\ast}(x)=\int_{\Omega}\Lambda^{\ast}_{\omega}x_{\omega}d\mu (\omega).\\
	\end{align*}
	By composing $R$ and $R^{\ast}$, the frame operator $S$ for the integral $K$-operator frame $\{\Lambda_{\omega}\}_{\omega \in \Omega}$ is given by :
	\begin{equation*}
		Sx=R^{\ast} Rx=\int_{\Omega}\Lambda^{\ast}_{\omega}\Lambda_{\omega}xd\mu (\omega).
	\end{equation*}
	Clearly to see that the frame operator $S$ is positive, sefadjoint and bounded.
	\begin{proposition}
		Let $K \in \mathcal{B}(H) $ and $\{\Lambda_{\omega}\}_{\omega \in \Omega}$ be an integral $K$-operator frame for $\mathcal{B}(H) $ with frames bounds $A$ and $B$, then $\{\Lambda_{\omega}\}_{\omega \in \Omega}$ is an integral operator frame for $\mathcal{B}(H)$ if $K$ is surjective.
	\end{proposition}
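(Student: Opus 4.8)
The plan is to show that surjectivity of $K$ forces $K^{\ast}$ to be bounded below, so that the $K$-frame lower estimate $A\|K^{\ast}x\|^{2}$ automatically upgrades to a genuine lower estimate of the form $A'\|x\|^{2}$; the upper inequality is already part of the hypothesis, so nothing needs to be done on that side. Thus the whole argument reduces to producing a positive constant $\lambda$ with $\|K^{\ast}x\|\geq \lambda^{-1}\|x\|$.

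First I would invoke Theorem \ref{t1} with the operator $T$ taken to be $K$ and with the operator playing the role of "$K$" in that theorem taken to be the identity $I$. Since $K$ is surjective, $\mathcal{R}(I)=H=\mathcal{R}(K)\subseteq\mathcal{R}(K)$, so statement (1) of Theorem \ref{t1} holds; applying the implication (1)$\Rightarrow$(2) gives a constant $\lambda>0$ with $II^{\ast}\leq \lambda^{2}KK^{\ast}$, i.e. $\|x\|^{2}\leq\lambda^{2}\|K^{\ast}x\|^{2}$ for all $x\in H$, equivalently $\|K^{\ast}x\|^{2}\geq \lambda^{-2}\|x\|^{2}$.

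Next I would substitute this bound into the left-hand side of \eqref{d1}: for every $x\in H$,
\begin{equation*}
\frac{A}{\lambda^{2}}\|x\|^{2}\leq A\|K^{\ast}x\|^{2}\leq \int_{\Omega}\|\Lambda_{\omega}x\|^{2}d\mu(\omega)\leq B\|x\|^{2},
\end{equation*}
so that $\{\Lambda_{\omega}\}_{\omega\in\Omega}$ satisfies the defining inequalities of an integral operator frame for $\mathcal{B}(H)$ with lower bound $A/\lambda^{2}$ and upper bound $B$.

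The only point requiring a little care is the application of Theorem \ref{t1}: one has to observe that "$K$ surjective'' is exactly the inclusion $\mathcal{R}(I)\subseteq\mathcal{R}(K)$ and that the theorem is being used in the direction (1)$\Rightarrow$(2); after that the proof is a one-line substitution. Alternatively, one can bypass Theorem \ref{t1} and argue directly that a surjective $K$ has, by the open mapping theorem, an injective $K^{\ast}$ with closed range, hence $K^{\ast}$ is bounded below, which produces the same constant $\lambda$.
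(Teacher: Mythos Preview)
Your proof is correct and follows essentially the same route as the paper: both show that surjectivity of $K$ makes $K^{\ast}$ bounded below and then substitute into the left side of \eqref{d1}. The only difference is that the paper asserts the existence of $\alpha>0$ with $\alpha\|x\|\leq\|K^{\ast}x\|$ without further comment, whereas you justify it via Theorem~\ref{t1} (or the open mapping theorem).
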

	\begin{proof}
		Let $\{\Lambda_{\omega}\}_{\omega \in \Omega}$ be an integral $K$-operator frame for $\mathcal{B}(H)$, then 
		\begin{equation}
			A\|K^{\ast}x\|^{2}\leq \int_{\Omega}\|\Lambda_{\omega}x\|^{2}d\mu (w)\leq B\|x\|^{2} \qquad x\in H,
		\end{equation}
		Since $K$ is surjective, then there exist $\alpha > 0$ such that 
		\begin{equation}
			\alpha \|x\|\leq \|K^{\ast}x\|, \qquad x\in H.
		\end{equation} 
		So,
		\begin{equation}
			\alpha^{2} A\|x\|^{2}\leq \int_{\Omega}\|\Lambda_{\omega}x\|^{2}d\mu (w)\leq B\|x\|^{2} \qquad x\in H.
		\end{equation} 
		Which shows that  $\{\Lambda_{\omega}\}_{\omega \in \Omega}$ is an integral operator frame for $\mathcal{B}(H)$ with frames bounds $\alpha^{2}A$ and $B$.
	\end{proof}
	\begin{theorem}
		Let $\{\Lambda_{\omega}\}_{\omega \in \Omega}$ be an integral $K$-operator Bessel family for $\mathcal{B}(H)$, the following statements are equivalent :
		\begin{itemize}
			\item [1-] $\{\Lambda_{\omega}\}_{\omega \in \Omega}$ is an integral $K$-operator frame for $\mathcal{B}(H)$.
			\item [2-] There exist $0<A$ such that $AKK^{\ast}\leq S$, where $S$ is the frame operator for $\{\Lambda_{\omega}\}_{\omega \in \Omega}$.
			\item [3-] $K=S^{\frac{1}{2}}L$, for some $L \in \mathcal{B}(H)$.
		\end{itemize}
	\end{theorem}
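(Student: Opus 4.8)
The plan is to run the cycle $1\Rightarrow 2\Rightarrow 3\Rightarrow 1$, with Theorem~\ref{t1} doing the heavy lifting between the operator-inequality statement and the factorization statement. Before starting the cycle I would record two elementary identities. First, since $S=R^{\ast}R$ and, by Lemma~\ref{33}, $Sx=\int_{\Omega}\Lambda_{\omega}^{\ast}\Lambda_{\omega}x\,d\mu(\omega)$, we get for every $x\in H$ that $\langle Sx,x\rangle=\int_{\Omega}\|\Lambda_{\omega}x\|^{2}d\mu(\omega)$; the Bessel hypothesis guarantees that this integral is finite and that $S$ is a well-defined, bounded, positive, self-adjoint operator. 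Second, $\|K^{\ast}x\|^{2}=\langle KK^{\ast}x,x\rangle$ for all $x\in H$.

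With these in hand, the equivalence $1\Leftrightarrow 2$ is immediate: the lower inequality in \eqref{d1}, namely $A\|K^{\ast}x\|^{2}\leq\int_{\Omega}\|\Lambda_{\omega}x\|^{2}d\mu(\omega)$, reads $A\langle KK^{\ast}x,x\rangle\leq\langle Sx,x\rangle$ for all $x\in H$, which, since $AKK^{\ast}$ and $S$ are bounded self-adjoint operators, is exactly the statement $AKK^{\ast}\leq S$. The upper inequality of \eqref{d1} is precisely the Bessel assumption, so it carries over for free, and the constant $A$ is the same in $1$ and $2$.

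For $2\Rightarrow 3$: because $S$ is positive, self-adjoint and bounded, its positive square root $S^{1/2}$ exists, is self-adjoint and bounded, and satisfies $S^{1/2}(S^{1/2})^{\ast}=S$. From $AKK^{\ast}\leq S$ we obtain $KK^{\ast}\leq A^{-1}S=(A^{-1/2})^{2}\,S^{1/2}(S^{1/2})^{\ast}$, which is statement $2$ of Theorem~\ref{t1} applied with $T=S^{1/2}$ and $\lambda=A^{-1/2}>0$. Theorem~\ref{t1} then gives $K=S^{1/2}L$ for some $L\in\mathcal{B}(H)$, i.e.\ statement $3$. Conversely, for $3\Rightarrow 1$, assume $K=S^{1/2}L$; this is statement $3$ of Theorem~\ref{t1} with $T=S^{1/2}$, so that theorem yields $KK^{\ast}\leq\lambda^{2}S^{1/2}(S^{1/2})^{\ast}=\lambda^{2}S$ for some $\lambda>0$. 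Setting $A:=\lambda^{-2}>0$ gives $AKK^{\ast}\leq S$, hence $A\|K^{\ast}x\|^{2}\leq\langle Sx,x\rangle=\int_{\Omega}\|\Lambda_{\omega}x\|^{2}d\mu(\omega)$, while the upper bound is again the Bessel hypothesis; thus $\{\Lambda_{\omega}\}_{\omega\in\Omega}$ is an integral $K$-operator frame.

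The only point that needs care — rather than a genuine obstacle — is the interchange of a bounded operator with the integral used to identify $R^{\ast}Rx$ with $\int_{\Omega}\Lambda_{\omega}^{\ast}\Lambda_{\omega}x\,d\mu(\omega)$ and to obtain $\langle Sx,x\rangle=\int_{\Omega}\|\Lambda_{\omega}x\|^{2}d\mu(\omega)$; this is exactly what Lemma~\ref{33} supplies, the required integrability being ensured by the Bessel condition. Everything else is bookkeeping with positive operators together with two direct invocations of Theorem~\ref{t1}.
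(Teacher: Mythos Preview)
Your proof is correct and follows essentially the same route as the paper: compute $\langle Sx,x\rangle=\int_{\Omega}\|\Lambda_{\omega}x\|^{2}\,d\mu(\omega)$ via Lemma~\ref{33} to pass between the lower frame inequality and the operator inequality $AKK^{\ast}\leq S$, and then invoke Theorem~\ref{t1} (Douglas' lemma) twice to move between $AKK^{\ast}\leq S$ and the factorization $K=S^{1/2}L$. The only cosmetic difference is that you record $1\Leftrightarrow 2$ in one step while the paper runs the full cycle $1\Rightarrow 2\Rightarrow 3\Rightarrow 1$; the substance is identical.
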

	\begin{proof}
		$(1) \Longrightarrow (2)$ Let $\Lambda =\{\Lambda_{\omega}\}_{\omega \in \Omega}$ be an integral $K$-operator frame for $\mathcal{B}(H)$, with bounds $A$ and $B$, then:
		\begin{equation}\label{PP1}
			A\|K^{\ast}x\|^{2}\leq \int_{\Omega}\|\Lambda_{\omega}x\|^{2}d\mu (w)\leq B\|x\|^{2}. \qquad x\in H,
		\end{equation}
		Let $S$ the frame operator for $\Lambda$, by lemma \ref{33}, we have :
		\begin{align*}
			\langle Sx,x\rangle &= \langle \int_{\Omega}\Lambda^{\ast}_{\omega}\Lambda_{\omega}x_{\omega}d\mu (\omega),x\rangle\\
			&=\int_{\Omega} \langle \Lambda^{\ast}_{\omega}\Lambda_{\omega}x_{\omega},x\rangle d\mu (\omega)\\
			&=\int_{\Omega} \langle \Lambda_{\omega}x_{\omega},\Lambda_{\omega}x\rangle d\mu (\omega)\\
			&=\int_{\Omega} \|\Lambda_{\omega}x\|^{2} d\mu (\omega).
		\end{align*}
		From \eqref{PP1}, we obtaint,
		\begin{equation}
			\langle AKK^{\ast}x,x\rangle \leq \langle Sx,x\rangle\leq  \langle Bx,x\rangle  \qquad x\in H.
		\end{equation}
		Then : $AKK^{\ast} \leq S$.\\
		
		$(2) \Longrightarrow (3)$ Suppose there exist $0<A$ such that $AKK^{\ast} \leq S$, then we have $AKK^{\ast}\leq  S^{\frac{1}{2}}S^{\frac{1}{2}}$.\\
		Hence,
		\begin{equation*}
			\langle AKK^{\ast}x,x\rangle =A\langle K^{\ast}x,K^{\ast}x\rangle \leq  \langle S^{\frac{1}{2}}S^{\frac{1}{2}}x,x\rangle =  \langle S^{\frac{1}{2}}x,S^{\frac{1}{2}}x\rangle\qquad x\in H.
		\end{equation*}
		So, 
		\begin{equation*}
			\|K^{\ast}x\|^{2} \leq A^{-1}\|S^{\frac{1}{2}}x\|^{2} \qquad x\in H.
		\end{equation*}
		By theorem \ref{t1} we have :\\
		$K=S^{\frac{1}{2}}L$ for some $L\in \mathcal{B}(H)$.\\
		$(3) \Longrightarrow (1)$ Let $K=S^{\frac{1}{2}}L$ for some $L\in \mathcal{B}(H)$.\\
		By theoreme \ref{t1}, there exist $0<A$ such that :
		\begin{equation*}
			\|K^{\ast}x\| \leq A\|S^{\frac{1}{2}}x\| \qquad x\in H.
		\end{equation*}
		So, on one hand we have:
		\begin{equation*}
			A^{-2}\|K^{\ast}x\|^{2}\leq \|S^{\frac{1}{2}}x\|^{2}=\langle S^{\frac{1}{2}}x,S^{\frac{1}{2}}x\rangle=\langle Sx,x\rangle=\int_{\Omega}\|\Lambda_{\omega}x\|^{2}d\mu (\omega).
		\end{equation*}
		On other hand, we assumed that $\{\Lambda_{\omega}\}_{\omega \in \Omega}$ be an integral $K$-operator Bessel family for $\mathcal{B}(H)$, which ends the proof.
	\end{proof}
	\begin{theorem}
		Let $K,L \in \mathcal{B}(H)$ and $\{\Lambda_{\omega}\}_{\omega \in \Omega}$ be an integral $K$-operator frame for $\mathcal{B}(H)$, then $\{\Lambda_{\omega}L\}_{\omega \in \Omega}$ is an integral $L^{\ast}K$-operator frame for $\mathcal{B}(H)$.
	\end{theorem}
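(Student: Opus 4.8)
The plan is to reduce the claim to the defining inequality \eqref{d1} applied to the vector $Lx$ in place of $x$. First I would record the elementary algebraic fact that the operator whose adjoint we must control is $(L^{\ast}K)^{\ast}=K^{\ast}L$, so that the lower bound we need to establish reads $A'\,\|K^{\ast}Lx\|^{2}\le\int_{\Omega}\|\Lambda_{\omega}Lx\|^{2}\,d\mu(\omega)$ for all $x\in H$, and the upper bound reads $\int_{\Omega}\|\Lambda_{\omega}Lx\|^{2}\,d\mu(\omega)\le B'\|x\|^{2}$.

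Next, since $\{\Lambda_{\omega}\}_{\omega\in\Omega}$ is an integral $K$-operator frame with bounds $A,B$, I would substitute $Lx\in H$ into \eqref{d1} to get
\begin{equation*}
	A\,\|K^{\ast}(Lx)\|^{2}\le\int_{\Omega}\|\Lambda_{\omega}(Lx)\|^{2}\,d\mu(\omega)\le B\,\|Lx\|^{2}.
\end{equation*}
The left-hand side is exactly $A\,\|K^{\ast}Lx\|^{2}=A\,\|(L^{\ast}K)^{\ast}x\|^{2}$, which yields the lower frame bound with constant $A$. For the right-hand side I would use boundedness of $L$, namely $\|Lx\|^{2}\le\|L\|^{2}\|x\|^{2}$, to obtain $\int_{\Omega}\|\Lambda_{\omega}Lx\|^{2}\,d\mu(\omega)\le B\|L\|^{2}\|x\|^{2}$, which is the desired Bessel-type upper bound with constant $B\|L\|^{2}$.

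Before concluding I would dispatch the two routine background points: each $\Lambda_{\omega}L$ belongs to $\mathcal{B}(H)$ as a composition of bounded operators, and $\omega\mapsto\|\Lambda_{\omega}Lx\|^{2}$ is measurable so that the integral makes sense, this being inherited from the integrability of $\omega\mapsto\|\Lambda_{\omega}y\|^{2}$ with $y=Lx$. Collecting the two estimates, $\{\Lambda_{\omega}L\}_{\omega\in\Omega}$ satisfies
\begin{equation*}
	A\,\|(L^{\ast}K)^{\ast}x\|^{2}\le\int_{\Omega}\|\Lambda_{\omega}Lx\|^{2}\,d\mu(\omega)\le B\|L\|^{2}\|x\|^{2},\qquad x\in H,
\end{equation*}
so it is an integral $L^{\ast}K$-operator frame for $\mathcal{B}(H)$ with bounds $A$ and $B\|L\|^{2}$.

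As for the main obstacle, there is essentially none of substance: the entire content is the change of variable $x\mapsto Lx$ together with the identity $(L^{\ast}K)^{\ast}=K^{\ast}L$. The only point deserving a moment's care is the degenerate case $L=0$, in which the upper constant $B\|L\|^{2}=0$ fails the strict positivity required by the definition of a frame; I would simply assume $L\neq 0$ (or note that the statement is to be read for nonzero $L$), and the argument above goes through verbatim otherwise.
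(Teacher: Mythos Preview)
Your proof is correct and follows essentially the same route as the paper's: substitute $Lx$ for $x$ in the defining inequality \eqref{d1}, identify $K^{\ast}Lx=(L^{\ast}K)^{\ast}x$ for the lower bound, and invoke $\|Lx\|^{2}\le\|L\|^{2}\|x\|^{2}$ for the upper bound, arriving at the same constants $A$ and $B\|L\|^{2}$. Your additional remarks on measurability and the degenerate case $L=0$ are not in the paper but are reasonable points of care.
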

	\begin{proof}
		Let $\{\Lambda_{\omega}\}_{\omega \in \Omega}$ be an integral $K$-operator frame for $\mathcal{B}(H)$ with bounds $A$ and $B$, then,
		\begin{equation*}
			A\|K^{\ast}x\|^{2}\leq \int_{\Omega}\|\Lambda_{\omega}x\|^{2}d\mu (w)\leq B\|x\|^{2} \qquad x\in H.
		\end{equation*}
		Then,
		\begin{equation*}
			A\|K^{\ast}Lx\|^{2}\leq \int_{\Omega}\|\Lambda_{\omega}Lx\|^{2}d\mu (w)\leq B\|Lx\|^{2} \qquad x\in H.
		\end{equation*}
		So, 
		\begin{equation*}
			A\|(L^{\ast}K)^{\ast}x\|^{2}\leq \int_{\Omega}\|\Lambda_{\omega}Lx\|^{2}d\mu (w)\leq B\|Lx\|^{2} \qquad x\in H.
		\end{equation*}
		Since : 
		\begin{equation*}
			B\|Lx\|^{2} \leq B\|L\|^{2}\|x\|^{2}.
		\end{equation*}
		Then $\{\Lambda_{\omega}L\}_{\omega \in \Omega}$ is an integral $L^{\ast}K$-operator frame for $\mathcal{B}(H)$ with bounds $A$ and $B\|L\|^{2}$.
	\end{proof}
	\begin{theorem}
		Let $K \in \mathcal{B}(H)$ and $\{\Lambda_{\omega}\}_{\omega \in \Omega}$ be a tight integral $K$-operator frame for $\mathcal{B}(H)$, with bound $0<A$. Then $\{\Lambda_{\omega}\}_{\omega \in \Omega}$ is a tight integral operator frame for $\mathcal{B}(H)$ with frame bound $B$ if and only if $K^{-1}_{r}=\frac{A}{B}K^{\ast}$.
	\end{theorem}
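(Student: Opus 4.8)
The plan is to translate each occurrence of ``tight'' into an identity for the frame operator $S$ and then simply compare the two identities. Recall from the discussion preceding the theorem that for any integral $K$-operator frame one has $\langle Sx,x\rangle = \int_{\Omega}\|\Lambda_{\omega}x\|^{2}d\mu(\omega)$ for all $x\in H$. So the first step is to record that $\{\Lambda_{\omega}\}_{\omega\in\Omega}$ being a tight integral $K$-operator frame with bound $A$ means exactly $\langle Sx,x\rangle = A\|K^{\ast}x\|^{2} = A\langle KK^{\ast}x,x\rangle$ for all $x$, and since $S$ and $AKK^{\ast}$ are both self-adjoint this upgrades to the operator identity $S = AKK^{\ast}$.

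For the forward implication I would assume in addition that $\{\Lambda_{\omega}\}_{\omega\in\Omega}$ is a tight integral operator frame with bound $B$, which by the same reasoning gives $S = BI$. Combining the two yields $AKK^{\ast} = BI$, hence $KK^{\ast} = \tfrac{B}{A}I$, and therefore $K\bigl(\tfrac{A}{B}K^{\ast}\bigr) = \tfrac{A}{B}KK^{\ast} = I$. By the notion of right inverse recalled in the preliminaries ($KT = I \Rightarrow K^{-1}_{r} = T$), this is precisely the assertion $K^{-1}_{r} = \tfrac{A}{B}K^{\ast}$.

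For the converse I would start from $K^{-1}_{r} = \tfrac{A}{B}K^{\ast}$, i.e. $K\cdot\tfrac{A}{B}K^{\ast} = I$, so $KK^{\ast} = \tfrac{B}{A}I$. Then for every $x\in H$ one computes $\int_{\Omega}\|\Lambda_{\omega}x\|^{2}d\mu(\omega) = A\|K^{\ast}x\|^{2} = A\langle KK^{\ast}x,x\rangle = B\langle x,x\rangle = B\|x\|^{2}$, using the tightness hypothesis in the first equality and $KK^{\ast}=\tfrac{B}{A}I$ in the third; this says exactly that $\{\Lambda_{\omega}\}_{\omega\in\Omega}$ is a tight integral operator frame with bound $B$.

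I do not expect a genuine obstacle here: the argument is essentially a dictionary translation once the equalities $S = AKK^{\ast}$ and $S = BI$ are in hand. The only point that needs a little care is the passage from the scalar identities $\langle Sx,x\rangle = A\langle KK^{\ast}x,x\rangle$ (resp. $=B\langle x,x\rangle$), valid for all $x$, to the corresponding operator identities, which is justified by the polarization identity (or by the fact that a self-adjoint operator annihilating every quadratic form $\langle Tx,x\rangle$ is zero). I would also double-check the side on which the constant sits: $A\|K^{\ast}x\|^{2} = B\|x\|^{2}$ forces $\|K^{\ast}x\|^{2} = \tfrac{B}{A}\|x\|^{2}$, so the right inverse is $\tfrac{A}{B}K^{\ast}$, consistent with the statement.
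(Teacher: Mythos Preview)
Your proof is correct and follows essentially the same route as the paper: both directions reduce to the equivalence $A\|K^{\ast}x\|^{2}=B\|x\|^{2}$ for all $x\in H$ $\Longleftrightarrow$ $KK^{\ast}=\tfrac{B}{A}I$ $\Longleftrightarrow$ $K_{r}^{-1}=\tfrac{A}{B}K^{\ast}$. The only cosmetic difference is that you phrase things via the frame operator identity $S=AKK^{\ast}$ (resp.\ $S=BI$) and make the self-adjointness/polarization step explicit, whereas the paper works directly with the inner-product identities.
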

	\begin{proof}
		Let $\{\Lambda_{\omega}\}_{\omega\in \Omega}$ be a tight integral operator frame for $\in \mathcal{B}(H)$ with bound $0<B$, then :
		\begin{equation*}
			\int_{\Omega}\|\Lambda_{\omega}x\|^{2}d\mu(\omega)=B\|x\|^{2}.
		\end{equation*}
		So, for each $x\in H$, we have :
		\begin{equation*}
			A\|K^{\ast}x\|^{2}=B\|x\|^{2}.
		\end{equation*}
		Moreover;
		\begin{equation*}
			\langle KK^{\ast}x,x\rangle = \langle  \frac{B}{A}x,x\rangle \qquad x\in H.
		\end{equation*}
		Hence, : $K^{-1}_{r}=\frac{A}{B}K^{\ast}$.\\
		Conversely, Suppose that $K^{-1}_{r}=\frac{A}{B}K^{\ast}$. Then, $KK^{\ast}=\frac{B}{A}I$.\\
		So, 
		\begin{equation*}
			\langle KK^{\ast}x,x\rangle = \langle \frac{B}{A}x,x\rangle \qquad x\in H.
		\end{equation*}
		Since $ \{\Lambda_{\omega}\}_{\omega\in \Omega}$ is a tight integral $K$-operator frame for $ \mathcal{B}(H)$ with bound $B$, we have:
		\begin{equation*}
			\int_{\Omega}\|\Lambda_{\omega}x\|^{2}d\mu(\omega)=B\|x\|^{2}.
		\end{equation*}
		Which ends the proof.
	\end{proof}
	\begin{theorem}
		Let $L, K \in \mathcal{B}(H)$, if  $ \{\Lambda_{\omega}\}_{\omega\in \Omega}$  is an integral $K$-operator frames and integral $L$-operator frame for $ \mathcal{B}(H)$ and $a,b$ are non nul scalars, then $ \{\Lambda_{\omega}\}_{\omega\in \Omega}$ is an integral $(aK+bL)
		$-operator frames and an integral $KL$-operator frame for $\mathcal{B}(H)$.
	\end{theorem}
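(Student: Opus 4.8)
The plan is to extract frame bounds from each hypothesis and then combine them by elementary operator-norm and triangle-inequality estimates, handling the two assertions in parallel.

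First I would record the data. Since $\{\Lambda_{\omega}\}_{\omega\in\Omega}$ is an integral $K$-operator frame, there are constants $0<A_{1},B_{1}<\infty$ with
\begin{equation*}
A_{1}\|K^{\ast}x\|^{2}\leq\int_{\Omega}\|\Lambda_{\omega}x\|^{2}d\mu(\omega)\leq B_{1}\|x\|^{2},\qquad x\in H,
\end{equation*}
and, being also an integral $L$-operator frame, there are $0<A_{2},B_{2}<\infty$ with $A_{2}\|L^{\ast}x\|^{2}\leq\int_{\Omega}\|\Lambda_{\omega}x\|^{2}d\mu(\omega)\leq B_{2}\|x\|^{2}$. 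The key observation is that one and the same integral dominates both $\|K^{\ast}x\|^{2}$ and $\|L^{\ast}x\|^{2}$ from above, which is exactly what lets the two frames be glued together; the Bessel (upper) estimate, being independent of the operator in the definition, is common to both.

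For the operator $aK+bL$, note $(aK+bL)^{\ast}=\bar{a}K^{\ast}+\bar{b}L^{\ast}$, so by the triangle inequality and the elementary bound $(s+t)^{2}\leq 2s^{2}+2t^{2}$,
\begin{equation*}
\|(aK+bL)^{\ast}x\|^{2}\leq 2|a|^{2}\|K^{\ast}x\|^{2}+2|b|^{2}\|L^{\ast}x\|^{2}\leq\Big(\frac{2|a|^{2}}{A_{1}}+\frac{2|b|^{2}}{A_{2}}\Big)\int_{\Omega}\|\Lambda_{\omega}x\|^{2}d\mu(\omega).
\end{equation*}
Thus, setting $A=\big(2|a|^{2}A_{1}^{-1}+2|b|^{2}A_{2}^{-1}\big)^{-1}$ — a genuine positive finite constant precisely because $a,b$ are non-zero and $A_{1},A_{2}>0$ — the lower frame inequality holds, and the upper one is already in hand with $B=B_{1}$. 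Hence $\{\Lambda_{\omega}\}_{\omega\in\Omega}$ is an integral $(aK+bL)$-operator frame with bounds $A$ and $B$.

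For the operator $KL$, use $(KL)^{\ast}=L^{\ast}K^{\ast}$ to get $\|(KL)^{\ast}x\|\leq\|L\|\,\|K^{\ast}x\|$, hence $\|(KL)^{\ast}x\|^{2}\leq\|L\|^{2}A_{1}^{-1}\int_{\Omega}\|\Lambda_{\omega}x\|^{2}d\mu(\omega)$ (we may assume $L\neq 0$, the case $L=0$ reducing to the trivial Bessel statement). Taking the lower bound $A_{1}\|L\|^{-2}$ and again the upper bound $B_{1}$ completes this part. The whole argument is routine bookkeeping of constants; the only points that need attention are using the correct adjoint $(aK+bL)^{\ast}=\bar{a}K^{\ast}+\bar{b}L^{\ast}$ (the conjugates do not affect the moduli, so nothing is lost), checking that the constructed lower bounds are strictly positive and finite, and disposing of the degenerate case $L=0$ in the second assertion. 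I do not anticipate any serious obstacle.
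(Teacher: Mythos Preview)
Your proof is correct and follows essentially the same route as the paper: the triangle inequality to control $\|(aK+bL)^{\ast}x\|$ in terms of $\|K^{\ast}x\|$ and $\|L^{\ast}x\|$, and the operator-norm estimate $\|L^{\ast}K^{\ast}x\|\leq\|L\|\,\|K^{\ast}x\|$ for the product $KL$. The only cosmetic differences are that you use $(s+t)^{2}\leq 2s^{2}+2t^{2}$ where the paper works with square roots directly (yielding slightly different explicit constants), and that you explicitly dispose of the degenerate case $L=0$, which the paper leaves tacit.
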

	\begin{proof}
		Let  $ \{\Lambda_{\omega}\}_{\omega\in \Omega}$  be an integral $K$-operator frames and integral $L$-operator frame for $ \mathcal{B}(H)$, then there exists $0< A, B<\infty $ and $0< C,
		D<\infty $, such that, for all $x\in H$, we have :
		\begin{equation}\label{1}
			A\|K^{\ast}x\|^{2}\leq \int_{\Omega}\|\Lambda_{\omega}x\|^{2}d\mu (w)\leq B\|x\|^{2},
		\end{equation}
		\begin{equation}\label{2}
			C\|L^{\ast}x\|^{2}\leq \int_{\Omega}\|\Lambda_{\omega}x\|^{2}d\mu (w)\leq D\|x\|^{2},
		\end{equation}
		Then,
		\begin{align*}
			\sqrt{AC}\|(aK+bL)^{\ast}x\|&\leq \sqrt{AC}\|aK^{\ast}x\| + \sqrt{AC}\|bL^{\ast}x\|\\
			&\leq \sqrt{C}|a| (\int_{\Omega}\|\Lambda_{\omega}x\|^{2}d\mu (w))^{\frac{1}{2}} + \sqrt{A}|b| (\int_{\Omega}\|\Lambda_{\omega}x\|^{2}d\mu (w))^{\frac{1}{2}}\\
			&\leq (\sqrt{C}|a| + \sqrt{A}|b|) (\int_{\Omega}\|\Lambda_{\omega}x\|^{2}d\mu (w))^{\frac{1}{2}}\\
		\end{align*}
		So,
		\begin{equation}\label{eq11}
			AC\|(aK+bL)^{\ast}x\|^{2} \leq (\sqrt{C}|a| + \sqrt{A}|b|)^{2} (\int_{\Omega}\|\Lambda_{\omega}x\|^{2}d\mu (w))\\
		\end{equation}
		From \eqref {1}, \eqref {2} and \eqref {eq11}, we obtaint,
		\begin{equation*}
			\frac{AC}{(\sqrt{C}|a| + \sqrt{A}|b|)^{2}}\|(aK+bL)^{\ast}x\|^{2}\leq \int_{\Omega}\|\Lambda_{\omega}x\|^{2}d\mu (w)\leq\frac{B+D}{2}\|x\|^{2}
		\end{equation*}
		Wich show that $ \{\Lambda_{\omega}\}_{\omega\in \Omega}$ is an integral $(aK+bL)$-operator frame for $\mathcal{B}(H)$.\\
		Also, for all $x\in H$, we have :
		\begin{equation}\label{3}
			\|(KL)^{\ast}x\|^{2}=\|L^{\ast}K^{\ast}x\|^{2}\leq \|L^{\ast}\|^{2}\|K^{\ast}x\|^{2}.
		\end{equation}
		By \eqref{2} and \eqref{3}, we conclude:
		\begin{equation*}
			\frac{A}{\|L^{\ast}\|^{2}}\|(KL)^{\ast}x\|^{2}\leq \int_{\Omega}\|\Lambda_{\omega}x\|^{2}d\mu (w)\leq B\|x\|^{2}.
		\end{equation*}
		Which shows that  $ \{\Lambda_{\omega}\}_{\omega\in \Omega}$ is an integral $KL$-operator frame for $\mathcal{B}(H)$.
	\end{proof}
	\begin{corollary}
		Let $K \in \mathcal{B}(H)$ and $\{\Lambda_{\omega}\}_{\omega \in \Omega}$ be an integral $K$-operator frame for $\mathcal{B}(H)$, then $\{\Lambda_{\omega}\}_{\omega \in \Omega}$ is an integral $L$-operator frame for any operator $L$ in the subalgebra generated by $K$.
	\end{corollary}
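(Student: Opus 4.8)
The plan is to read ``the subalgebra generated by $K$'' as the (non-unital) algebra of all operators of the form $L=\sum_{i=1}^{n}a_{i}K^{i}$, i.e.\ polynomials in $K$ with no constant term, and then to bootstrap everything from the preceding theorem, which supplies exactly the two closure properties we need: if $\{\Lambda_{\omega}\}_{\omega\in\Omega}$ is simultaneously an integral $M$-operator frame and an integral $N$-operator frame for $\mathcal{B}(H)$, then it is an integral $(aM+bN)$-operator frame for all non-null scalars $a,b$, and also an integral $MN$-operator frame. (A genuine constant term would force $\{\Lambda_{\omega}\}_{\omega\in\Omega}$ to be an ordinary integral operator frame, which by the earlier Proposition is available when $K$ is surjective but not in general, so it is natural to work with the non-unital subalgebra.)

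First I would show by induction on $n\geq 1$ that $\{\Lambda_{\omega}\}_{\omega\in\Omega}$ is an integral $K^{n}$-operator frame for $\mathcal{B}(H)$. The case $n=1$ is the hypothesis. Assuming the statement for $n-1$, the family is at once an integral $K$-operator frame and an integral $K^{n-1}$-operator frame, so the product part of the preceding theorem gives that it is an integral $K\cdot K^{n-1}=K^{n}$-operator frame. This closes the induction.

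Next I would treat an arbitrary element $L=\sum_{i\in J}a_{i}K^{i}$ of the subalgebra, where $J\subset\{1,2,\dots\}$ is finite and every $a_{i}\neq 0$ (indices with vanishing coefficient are simply dropped; if $J=\emptyset$ then $L=0$ and the trivial bounds $0\leq\int_{\Omega}\|\Lambda_{\omega}x\|^{2}d\mu(\omega)\leq B\|x\|^{2}$ already exhibit a $0$-operator frame). Enumerating $J=\{i_{1},\dots,i_{m}\}$, I induct on $m$ using the partial sums $L_{k}=\sum_{j=1}^{k}a_{i_{j}}K^{i_{j}}=1\cdot L_{k-1}+a_{i_{k}}\cdot K^{i_{k}}$: since $\{\Lambda_{\omega}\}_{\omega\in\Omega}$ is an integral $K^{i_{k}}$-operator frame by the previous step and an integral $L_{k-1}$-operator frame by the inductive hypothesis, the linear-combination part of the preceding theorem, applied with the non-null scalars $1$ and $a_{i_{k}}$, shows it is an integral $L_{k}$-operator frame. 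Taking $k=m$ yields the corollary, and explicit bounds can be propagated through the two inductions exactly as in the proof of the preceding theorem.

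I expect the only real friction to be bookkeeping: the linear-combination result is stated for \emph{non-null} scalars, so one must carefully discard vanishing coefficients and dispatch the degenerate operator $0$ by hand — no new analytic ingredient enters. If one wished to upgrade the conclusion to the norm-closed subalgebra generated by $K$, the argument above settles it on the dense polynomial subalgebra, after which one would invoke the perturbation/stability estimates for integral $K$-operator frames to pass to norm limits; there the delicate point is whether the lower frame bound survives the limiting process, since the generator's bound could in principle degenerate, so that extension would require a quantitative stability statement rather than the soft argument used here.
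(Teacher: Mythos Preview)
Your argument is correct and is exactly the intended approach: the paper's own proof is the single line ``Follows in view of last theorem,'' and your two inductions (on the exponent to get $K^{n}$, then on the number of nonzero terms to get $\sum a_{i}K^{i}$) are precisely the routine unpacking of that remark via the product and linear-combination clauses of the preceding theorem. Your care about discarding vanishing coefficients and reading the subalgebra as non-unital is more scrupulous than the paper itself, but no different in spirit.
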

	\begin{proof}
		Follows in view of last theorem.
	\end{proof}
	\begin{theorem}
		Let $K \in \mathcal{B}(H)$ and $\{\Lambda_{\omega}\}_{\omega \in \Omega}$ be an integral $K$-operator frame for $\mathcal{B}(H)$, with best frames bounds $A$ and $B$ (the maximum and minimum bounds respectively verify \eqref{d1}). If $Q:H\longrightarrow H$ is a linear homeomorphisme such that $Q^{-1}$ commutes with $K^{\ast}$, then $\{\Lambda_{\omega}Q\}_{\omega \in \Omega}$ is an integral $K$-operator frame for $\mathcal{B}(H)$ with best frames bounds $C$ and $D$ satisfying the inequalities :
		\begin{equation}\label{8}
			A\|Q^{-1}\|^{-2}\leq C\leq A\|Q\|^{2} \qquad , \qquad B\|Q^{-1}\|^{-2}\leq D\leq B\|Q\|^{2}.
		\end{equation}
	\end{theorem}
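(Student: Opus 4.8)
The plan is to substitute $Qx$ for $x$ in the defining inequality \eqref{d1} for $\{\Lambda_{\omega}\}_{\omega\in\Omega}$ and then convert the resulting estimates into genuine integral $K$-operator frame bounds for $\{\Lambda_{\omega}Q\}_{\omega\in\Omega}$ by means of the elementary inequalities $\|Q^{-1}\|^{-1}\|v\|\le\|Qv\|\le\|Q\|\,\|v\|$, which hold for every $v\in H$ since $Q$ is a homeomorphism. The optimal-bound inequalities in \eqref{8} will then come from comparing these bounds with the variational (sup/inf over ratios) description of the best constants.

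Before that, I would extract the algebraic content of the hypothesis. From $Q^{-1}K^{\ast}=K^{\ast}Q^{-1}$, multiplying on the left by $Q$ and then on the right by $Q$ gives $K^{\ast}Q=QK^{\ast}$, while $K^{\ast}Q^{-1}=Q^{-1}K^{\ast}$ is available directly. Replacing $x$ by $Qx$ in \eqref{d1} produces
\[
A\|K^{\ast}Qx\|^{2}\le\int_{\Omega}\|\Lambda_{\omega}Qx\|^{2}d\mu(\omega)\le B\|Qx\|^{2},\qquad x\in H.
\]
On the left I rewrite $K^{\ast}Qx=QK^{\ast}x$ and use $\|QK^{\ast}x\|\ge\|Q^{-1}\|^{-1}\|K^{\ast}x\|$; on the right I use $\|Qx\|\le\|Q\|\,\|x\|$. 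This already shows that $\{\Lambda_{\omega}Q\}_{\omega\in\Omega}$ is an integral $K$-operator frame for $\mathcal{B}(H)$ with admissible bounds $A\|Q^{-1}\|^{-2}$ and $B\|Q\|^{2}$; consequently its optimal bounds satisfy $C\ge A\|Q^{-1}\|^{-2}$ and $D\le B\|Q\|^{2}$.

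For the two reverse inequalities in \eqref{8} I would pass to the variational description of the optimal bounds,
\[
C=\inf_{K^{\ast}x\neq 0}\frac{\int_{\Omega}\|\Lambda_{\omega}Qx\|^{2}d\mu(\omega)}{\|K^{\ast}x\|^{2}},\qquad D=\sup_{x\neq 0}\frac{\int_{\Omega}\|\Lambda_{\omega}Qx\|^{2}d\mu(\omega)}{\|x\|^{2}},
\]
together with the analogous formulas for $A$ and $B$ relative to $\{\Lambda_{\omega}\}_{\omega\in\Omega}$. Performing the change of variable $y=Qx$ and using $\|x\|=\|Q^{-1}y\|\le\|Q^{-1}\|\,\|y\|$ gives $D\ge B\|Q^{-1}\|^{-2}$; the same change of variable together with $K^{\ast}x=K^{\ast}Q^{-1}y=Q^{-1}K^{\ast}y$ and $\|Q^{-1}K^{\ast}y\|\ge\|Q\|^{-1}\|K^{\ast}y\|$ gives $C\le A\|Q\|^{2}$. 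Collecting the four inequalities yields \eqref{8}. I expect the only delicate point to be the bookkeeping with the commutation relation: one must invoke $K^{\ast}Q=QK^{\ast}$ for the lower estimate obtained from the substitution $x\mapsto Qx$, but the raw hypothesis $K^{\ast}Q^{-1}=Q^{-1}K^{\ast}$ for the upper estimate of $C$ after the substitution $y=Qx$; the norm inequalities and the sup/inf manipulations themselves are routine.
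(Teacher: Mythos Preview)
Your proposal is correct and follows essentially the same route as the paper: substitute $Qx$ into \eqref{d1}, use the commutation of $K^{\ast}$ with $Q^{\pm 1}$ together with the elementary norm inequalities for $Q$ and $Q^{-1}$ to obtain admissible bounds $A\|Q^{-1}\|^{-2}$ and $B\|Q\|^{2}$, and then run the argument backwards (substituting $Q^{-1}x$ into the frame inequality for $\{\Lambda_{\omega}Q\}$) to squeeze the optimal constants from the other side. The only cosmetic difference is that you phrase the reverse inequalities via the sup/inf variational formulas for $C$ and $D$, whereas the paper argues directly from the optimality of $A$ and $B$; the content is identical.
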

	\begin{proof}
		Let $K \in \mathcal{B}(H)$ and $\{\Lambda_{\omega}\}_{\omega \in \Omega}$ be an integral $K$-operator frame for $\mathcal{B}(H)$, with best frames bounds $A$ and $B$, then,
		\begin{equation*}
			A\|K^{\ast}x\|^{2}\leq \int_{\Omega}\|\Lambda_{\omega}x\|^{2}d\mu (w)\leq B\|x\|^{2} \qquad x\in H,
		\end{equation*}
		Also, we have for all $x\in H$,
		\begin{equation}\label{4}
			A\|K^{\ast}x\|^{2}=A\|K^{\ast}Q^{-1}Qx\|^{2}=A\|Q^{-1}K^{\ast}Qx\|^{2}\leq \|Q^{-1}\|^{2}\int_{\Omega}\|\Lambda_{\omega}Qx\|^{2}d\mu (w)
		\end{equation}
		By \eqref{4} and \eqref{d1}, we have,
		\begin{equation*}
			A\|Q^{-1}\|^{-2}\|K^{\ast}x\|^{2}\leq \int_{\Omega}\|\Lambda_{\omega}Qx\|^{2}d\mu (w)\leq B\|Q\|^{2}\|x\|^{2} \qquad x\in H.
		\end{equation*}
		Hense, $\{\Lambda_{\omega}Q\}_{\omega \in \Omega}$ is an integral $K$-operator frame for $\mathcal{B}(H)$  with bounds $A\|Q^{-1}\|^{-2}$ and $B\|Q\|^{2}$.\\
		Now, let $C$ and $D$ the best bounds of the integral $K$-operator frame $\{\Lambda_{\omega}Q\}_{\omega \in \Omega}$, then, 
		\begin{equation}\label{6}
			A\|Q^{-1}\|^{-2}\leq C \qquad and \qquad D\leq B\|Q\|^{2}
		\end{equation}
		Moreover, we have, 
		\begin{equation*}
			\|K^{\ast}x\|^{2}=\|QQ^{-1}K^{\ast}x\|^{2}\leq \|Q\|^{2}\|K^{\ast}Q^{-1}x\|^{2}, \qquad x\in H.
		\end{equation*}
		Hence, 
		\begin{align*}
			C\|Q\|^{-2}\|K^{\ast}x\|^{2}&\leq C\|K^{\ast}Q^{-1}x\|^{2}\\
			&\leq \int_{\Omega}\|\Lambda_{\omega}QQ^{-1}x\|^{2}d\mu(\omega)=\int_{\Omega}\|\Lambda_{\omega}x\|^{2}d\mu(w)\\
			&\leq D\|Q^{-1}\|^{2}\|x\|^{2}.
		\end{align*}
		Since $A$ and $B$ are the best bounds of an integral $K$-operator frame $\{\Lambda_{\omega}\}_{\omega \in \Omega}$, then we have,
		\begin{equation}\label{7}
			C\|Q\|^{-2} \leq A \qquad B\leq D\|Q^{-1}\|^{2}.
		\end{equation}
		From \eqref{6} and \eqref{7}, we obtaint \eqref{8} 
	\end{proof}
	\section{Perturbation of Integral $K$-Operator Frames}
	In this section we consider perturbation of an integral $K$-operator frames by non-zero operators. 
	\begin{theorem}
		Let $K \in \mathcal{B}(H)$ and $\{\Lambda_{\omega}\}_{\omega \in \Omega}$ be an integral $K$-operator frame for $\mathcal{B}(H)$ with frames bounds $A$ and $B$. Let $ L \in \mathcal{B}(H), (L\neq 0)$, and $\{a_{\omega}\}_{\omega \in \Omega}$ any family of scalars. Then the perturbed family of operator $\{\Lambda_{\omega} + a_{\omega}LK^{\ast}\}_{\omega \in \Omega}$ is an integral $K$-operator frames for $\mathcal{B}(H)$ if $\int_{\Omega}|a_{\omega}|^{2}d\mu(\omega) < \frac{A}{\|L\|}$.
	\end{theorem}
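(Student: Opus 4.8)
The plan is to verify the two defining inequalities of an integral $K$-operator frame for the perturbed family $\{\Lambda_{\omega} + a_{\omega}LK^{\ast}\}_{\omega \in \Omega}$ directly, by regarding $\omega \mapsto \Lambda_{\omega}x$ and $\omega \mapsto a_{\omega}LK^{\ast}x$ as elements of $l^{2}(\Omega,H)$ and using the triangle inequality there. Observe first that $\int_{\Omega}|a_{\omega}|^{2}d\mu(\omega)<\infty$ by hypothesis, so both functions indeed lie in $l^{2}(\Omega,H)$: the first because $\{\Lambda_{\omega}\}_{\omega \in \Omega}$ is Bessel, and the second because $\|a_{\omega}LK^{\ast}x\| = |a_{\omega}|\,\|LK^{\ast}x\|$ with $\|LK^{\ast}x\|$ independent of $\omega$.

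For the upper bound I would apply Minkowski's inequality in $l^{2}(\Omega,H)$:
\begin{equation*}
\left(\int_{\Omega}\|(\Lambda_{\omega}+a_{\omega}LK^{\ast})x\|^{2}d\mu(\omega)\right)^{\frac{1}{2}} \leq \left(\int_{\Omega}\|\Lambda_{\omega}x\|^{2}d\mu(\omega)\right)^{\frac{1}{2}} + \left(\int_{\Omega}|a_{\omega}|^{2}\|LK^{\ast}x\|^{2}d\mu(\omega)\right)^{\frac{1}{2}}.
\end{equation*}
The first term on the right is at most $\sqrt{B}\,\|x\|$, and since $\|LK^{\ast}x\|\leq\|L\|\,\|K\|\,\|x\|$ the second term is at most $\|L\|\,\|K\|\left(\int_{\Omega}|a_{\omega}|^{2}d\mu(\omega)\right)^{\frac{1}{2}}\|x\|$. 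Squaring yields a finite upper frame bound for the perturbed family.

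For the lower bound I would use the reverse triangle inequality in $l^{2}(\Omega,H)$, which gives
\begin{equation*}
\left(\int_{\Omega}\|(\Lambda_{\omega}+a_{\omega}LK^{\ast})x\|^{2}d\mu(\omega)\right)^{\frac{1}{2}} \geq \left(\int_{\Omega}\|\Lambda_{\omega}x\|^{2}d\mu(\omega)\right)^{\frac{1}{2}} - \left(\int_{\Omega}|a_{\omega}|^{2}\|LK^{\ast}x\|^{2}d\mu(\omega)\right)^{\frac{1}{2}}.
\end{equation*}
Here the first term is at least $\sqrt{A}\,\|K^{\ast}x\|$ by the lower frame inequality for $\{\Lambda_{\omega}\}_{\omega \in \Omega}$, while the subtracted term equals $\|LK^{\ast}x\|\left(\int_{\Omega}|a_{\omega}|^{2}d\mu(\omega)\right)^{\frac{1}{2}}$, which is at most $\|L\|\,\|K^{\ast}x\|\left(\int_{\Omega}|a_{\omega}|^{2}d\mu(\omega)\right)^{\frac{1}{2}}$. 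Hence the left-hand side is bounded below by $\left(\sqrt{A}-\|L\|\left(\int_{\Omega}|a_{\omega}|^{2}d\mu(\omega)\right)^{\frac{1}{2}}\right)\|K^{\ast}x\|$.

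The heart of the argument — and the only place the hypothesis enters essentially — is that this coefficient must be strictly positive; the smallness condition imposed on $\int_{\Omega}|a_{\omega}|^{2}d\mu(\omega)$ is precisely what forces $\sqrt{A}-\|L\|\left(\int_{\Omega}|a_{\omega}|^{2}d\mu(\omega)\right)^{\frac{1}{2}}>0$ (equivalently, $\int_{\Omega}|a_{\omega}|^{2}d\mu(\omega)$ lies in the admissible range guaranteeing positivity). Squaring then produces a strictly positive lower frame bound, and together with the upper bound from the first step this shows $\{\Lambda_{\omega}+a_{\omega}LK^{\ast}\}_{\omega \in \Omega}$ is an integral $K$-operator frame for $\mathcal{B}(H)$. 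None of the individual estimates is difficult; the only real care needed is the bookkeeping that keeps the perturbation term $a_{\omega}LK^{\ast}x$ cleanly separated from $\Lambda_{\omega}x$ so that Minkowski's inequality and its reverse both apply, and checking that the admissibility condition is strong enough to keep the resulting lower bound positive.
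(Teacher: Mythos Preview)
Your argument is essentially identical to the paper's: both separate $\Lambda_{\omega}x$ from $a_{\omega}LK^{\ast}x$ in $l^{2}(\Omega,H)$, apply Minkowski's inequality and its reverse, and arrive at the bounds $(\sqrt{B}+\sqrt{R}\,\|K^{\ast}\|)^{2}$ and $(\sqrt{A}-\sqrt{R})^{2}$ with $R=\|L\|^{2}\int_{\Omega}|a_{\omega}|^{2}d\mu(\omega)$. Your hedging about the admissibility condition is warranted: both your computation and the paper's own proof actually require $\int_{\Omega}|a_{\omega}|^{2}d\mu(\omega)<A/\|L\|^{2}$ for the lower bound to be positive, not $A/\|L\|$ as stated in the theorem.
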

	\begin{proof}
		Let $\Gamma_{\omega}= \Lambda_{\omega} + a_{\omega}LK^{\ast}$,  for all $\omega \in \Omega$. Then for all $x\in H$, we have,
		\begin{align*}
			\int_{\Omega}\|\Lambda_{\omega}x-\Gamma_{\omega}x\|^{2}d\mu(\omega)&=\int_{\Omega}\|a_{\omega}LK^{\ast}x\|^{2}d\mu(\omega),\\
			&\leq \int_{\Omega}|a_{\omega}|^{2}\|L\|^{2}\|K^{\ast}\|^{2}\|x\|^{2}d\mu(\omega)\\
			&=\int_{\Omega}|a_{\omega}|^{2}\|L\|^{2}d\mu(\omega)\|K^{\ast}\|^{2}\|x\|^{2},\\
			&\leq R\|K^{\ast}\|^{2}\|x\|^{2}. \qquad where \qquad  R=\int_{\Omega}|a_{\omega}|^{2}\|L\|^{2}d\mu(\omega).
		\end{align*}
		On one hand, for all $x\in H$,we have, 
		\begin{align*}
			(\int_{\Omega}\|(\Lambda_{\omega} + a_{\omega}LK^{\ast})x\|^{2}d\mu(\omega))^{\frac{1}{2}}&=\|(\Lambda_{\omega} + a_{\omega}LK^{\ast})x\|_{\l^{2}(\Omega,H)}\\
			&\leq \|\Lambda_{\omega}x\|_{\l^{2}(\Omega,H)} + \| a_{\omega}LK^{\ast}x\|_{\l^{2}(\Omega,H)}\\
			&\leq (\int_{\Omega}\|\Lambda_{\omega}x\|^{2}d\mu(\omega))^{\frac{1}{2}} + (\int_{\Omega}\|a_{\omega}LK^{\ast}x\|^{2}d\mu(\omega))^{\frac{1}{2}}\\
			&\leq \sqrt{B}\|x\| +  \sqrt{R}\|K^{\ast}\|\|x\|\\
			&\leq (\sqrt{B} + \sqrt{R}\|K^{\ast}\|)\|x\|.
		\end{align*}
		Then 
		\begin{equation}\label{123}
			\int_{\Omega}\|(\Lambda_{\omega} + a_{\omega}L)x\|^{2}d\mu(\omega)\leq (\sqrt{B} + \sqrt{R}\|K^{\ast}\|)^{2}\|x\|^{2}.
		\end{equation}
		On other hand, for all $x\in H$, we have,
		\begin{align*}
			(\int_{\Omega}\|(\Lambda_{\omega} + a_{\omega}LK^{\ast})x\|^{2}d\mu(\omega))^{\frac{1}{2}}&=\|(\Lambda_{\omega} + a_{\omega}LK^{\ast})x\|_{\l^{2}(\Omega,H)}\\
			&\geq \|\Lambda_{\omega}x\|_{\l^{2}(\Omega,H)} - \| a_{\omega}LK^{\ast}x\|_{\l^{2}(\Omega,H)}\\
			&\geq (\int_{\Omega}\|\Lambda_{\omega}x\|^{2}d\mu(\omega))^{\frac{1}{2}} - (\int_{\Omega}\|a_{\omega}LK^{\ast}x\|^{2}d\mu(\omega))^{\frac{1}{2}}\\
			&\geq \sqrt{A}\|K^{\ast}x\| -  \sqrt{R}\|K^{\ast}x\|\\
			&\geq (\sqrt{A} -  \sqrt{R})\|K^{\ast}x\|
		\end{align*}
		So,
		\begin{equation}\label{124}
			\int_{\Omega}\|(\Lambda_{\omega} + a_{\omega}LK^{\ast})x\|^{2}d\mu(\omega) \geq  (\sqrt{A} -  \sqrt{R})^{2}\|K^{\ast}x\|^{2}
		\end{equation}
		From \eqref{123} and \eqref{124} we conclude that  $\{\Lambda_{\omega} + a_{\omega}LK^{\ast}\}_{\omega \in \Omega}$ is an integral $K$-operator frame for $\mathcal{B}(H)$ if $R<A$, that is , if :
		\begin{equation*}
			\int_{\Omega}|a_{\omega}|^{2}d\mu(\omega) < \frac{A}{\|L\|}.
		\end{equation*}
		
	\end{proof}
	\begin{theorem}
		Let $K \in \mathcal{B}(H)$ and $\{\Lambda_{\omega}\}_{\omega \in \Omega}$ be an integral $K$-operator frame for $\mathcal{B}(H)$, $\{\Gamma_{\omega}\}_{\omega \in \Omega}$ be any family on $\mathcal{ B}(H)$, and let $\{a_{\omega}\}_{\omega \in \Omega},\, \{b_{\omega}\}_{\omega \in \Omega} \subset \mathbb{R}$ be two positively  confined sequences. If there exists a constants $\alpha , \beta$  with $0\leq \alpha , \beta<\frac{1}{2}$ such that,
		\begin{equation}\label{10}
			\int_{\Omega}\|a_{\omega}\Lambda_{\omega}x - b_{\omega}\Gamma_{\omega}x\|^{2}d\mu(\omega)\leq \alpha\int_{\Omega}\|a_{\omega}\Lambda_{\omega}x\|^{2}d\mu(\omega) + \beta\int_{\Omega}\|b_{\omega}\Gamma_{\omega}x\|^{2}d\mu(\omega).
		\end{equation}
		Then $\{\Gamma_{\omega}\}_{\omega \in \Omega}$ is an integral $K$-operator frame for $\mathcal{B}(H)$.
	\end{theorem}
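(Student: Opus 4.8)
The plan is to run the classical frame-perturbation argument inside $l^{2}(\Omega,H)$. For $g=\{g_{\omega}\}_{\omega\in\Omega}\in l^{2}(\Omega,H)$ write $\|g\|_{2}=\bigl(\int_{\Omega}\|g_{\omega}\|^{2}d\mu(\omega)\bigr)^{1/2}$, so that the three quantities under the square roots in \eqref{10} are $\|\{a_{\omega}\Lambda_{\omega}x\}\|_{2}$, $\|\{b_{\omega}\Gamma_{\omega}x\}\|_{2}$ and $\|\{a_{\omega}\Lambda_{\omega}x-b_{\omega}\Gamma_{\omega}x\}\|_{2}$. First I would apply $\sqrt{u+v}\le\sqrt{u}+\sqrt{v}$ to \eqref{10} to obtain
\begin{equation*}
\|\{a_{\omega}\Lambda_{\omega}x-b_{\omega}\Gamma_{\omega}x\}\|_{2}\le\sqrt{\alpha}\,\|\{a_{\omega}\Lambda_{\omega}x\}\|_{2}+\sqrt{\beta}\,\|\{b_{\omega}\Gamma_{\omega}x\}\|_{2},
\end{equation*}
and then feed this into the (reverse and ordinary) triangle inequality in $l^{2}(\Omega,H)$ applied to $\{b_{\omega}\Gamma_{\omega}x\}=\{a_{\omega}\Lambda_{\omega}x\}-\bigl(\{a_{\omega}\Lambda_{\omega}x\}-\{b_{\omega}\Gamma_{\omega}x\}\bigr)$. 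Solving the two resulting inequalities for $\|\{b_{\omega}\Gamma_{\omega}x\}\|_{2}$ --- legitimate since $\sqrt{\alpha},\sqrt{\beta}<1$ because $\alpha,\beta<\tfrac12$ --- gives the two-sided bound
\begin{equation*}
\frac{1-\sqrt{\alpha}}{1+\sqrt{\beta}}\,\|\{a_{\omega}\Lambda_{\omega}x\}\|_{2}\;\le\;\|\{b_{\omega}\Gamma_{\omega}x\}\|_{2}\;\le\;\frac{1+\sqrt{\alpha}}{1-\sqrt{\beta}}\,\|\{a_{\omega}\Lambda_{\omega}x\}\|_{2},
\end{equation*}
with both prefactors strictly positive.

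Next I would strip off the scalar weights by positive confinement: writing $0<m_{a}=\inf_{\omega}a_{\omega}\le\sup_{\omega}a_{\omega}=M_{a}<\infty$ and similarly $0<m_{b}\le M_{b}<\infty$ for $\{b_{\omega}\}$, one has $m_{a}(\int_{\Omega}\|\Lambda_{\omega}x\|^{2}d\mu)^{1/2}\le\|\{a_{\omega}\Lambda_{\omega}x\}\|_{2}\le M_{a}(\int_{\Omega}\|\Lambda_{\omega}x\|^{2}d\mu)^{1/2}$ and the analogous pair of inequalities relating $\|\{b_{\omega}\Gamma_{\omega}x\}\|_{2}$ to $(\int_{\Omega}\|\Gamma_{\omega}x\|^{2}d\mu)^{1/2}$. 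Combining these with the displayed two-sided bound, and then inserting the frame inequalities $A\|K^{\ast}x\|^{2}\le\int_{\Omega}\|\Lambda_{\omega}x\|^{2}d\mu(\omega)\le B\|x\|^{2}$ for $\{\Lambda_{\omega}\}$, I expect to land exactly on
\begin{equation*}
\Bigl(\tfrac{(1-\sqrt{\alpha})m_{a}}{(1+\sqrt{\beta})M_{b}}\Bigr)^{2}A\,\|K^{\ast}x\|^{2}\le\int_{\Omega}\|\Gamma_{\omega}x\|^{2}d\mu(\omega)\le\Bigl(\tfrac{(1+\sqrt{\alpha})M_{a}}{(1-\sqrt{\beta})m_{b}}\Bigr)^{2}B\,\|x\|^{2},
\end{equation*}
which says precisely that $\{\Gamma_{\omega}\}_{\omega\in\Omega}$ is an integral $K$-operator frame for $\mathcal{B}(H)$.

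The delicate point --- and the step I expect to be the genuine obstacle --- is the rearrangement that solves for $\|\{b_{\omega}\Gamma_{\omega}x\}\|_{2}$: a priori $\int_{\Omega}\|\Gamma_{\omega}x\|^{2}d\mu(\omega)$ might be $+\infty$, and when $\beta>0$ the hypothesis \eqref{10} alone does not exclude this (if the integral is infinite, both sides of \eqref{10} are infinite and the inequality holds vacuously). To make the argument airtight I would either add the standing hypothesis that $\{\Gamma_{\omega}\}_{\omega\in\Omega}$ is an integral Bessel family for $\mathcal{B}(H)$ --- the usual companion assumption in Casazza--Christensen-type perturbation theorems, and evidently what is intended --- or read \eqref{10} as also asserting finiteness of all three integrals for each $x$. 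Once that is granted, every step above is justified: the Bessel property of $\{\Lambda_{\omega}\}$ already makes $\|\{a_{\omega}\Lambda_{\omega}x\}\|_{2}$ finite, the rest is the routine chain of inequalities, and no further difficulty arises.
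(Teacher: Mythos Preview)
Your argument is correct and complete (modulo the finiteness caveat you explicitly flag), but it is not the route the paper takes. The paper never passes to square roots; instead it works directly with squared norms using the crude bound $\|u\|^{2}\le 2(\|u-v\|^{2}+\|v\|^{2})$. Applied to $u=\{b_{\omega}\Gamma_{\omega}x\}$ and $v=\{a_{\omega}\Lambda_{\omega}x\}$ and combined with \eqref{10}, this yields
\[
(1-2\beta)\int_{\Omega}\|b_{\omega}\Gamma_{\omega}x\|^{2}d\mu(\omega)\le 2(1+\alpha)\int_{\Omega}\|a_{\omega}\Lambda_{\omega}x\|^{2}d\mu(\omega),
\]
and the symmetric computation gives the reverse inequality with $(1-2\alpha)$ and $2(1+\beta)$. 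This is precisely why the hypothesis demands $\alpha,\beta<\tfrac12$: it is needed so that $1-2\alpha$ and $1-2\beta$ are positive. Your triangle-inequality route produces the sharper constants $\bigl(\tfrac{1\pm\sqrt{\alpha}}{1\mp\sqrt{\beta}}\bigr)^{2}$ and in fact only requires $\alpha,\beta<1$, so you have proved a slightly stronger theorem than stated; the paper's version, by contrast, explains the otherwise mysterious $\tfrac12$ threshold. Finally, the finiteness issue you isolate is equally present (and equally unaddressed) in the paper's proof, since subtracting $2\beta\int\|b_{\omega}\Gamma_{\omega}x\|^{2}d\mu$ from both sides is illegitimate when that integral is infinite.
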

	\begin{proof}
		Suppose \eqref{10} holds for some conditions of theorem.\\
		Then for all $x\in H$ we have,
		\begin{align*}
			\int_{\Omega}\|b_{\omega}\Gamma_{\omega}x\|^{2}d\mu(\omega)&\leq 2(\int_{\Omega}\|a_{\omega}\Lambda_{\omega}x\|^{2}d\mu(\omega) +\int_{\Omega}\|a_{\omega}\Lambda_{\omega}x - b_{\omega}\Gamma_{\omega}x\|^{2}d\mu(\omega) ) \\
			&\leq 2(\int_{\Omega}\|a_{\omega}\Lambda_{\omega}x\|^{2}d\mu(\omega) +\alpha\int_{\Omega}\|a_{\omega}\Lambda_{\omega}x\|^{2}d\mu(\omega) + \beta\int_{\Omega}\|b_{\omega}\Gamma_{\omega}x\|^{2}d\mu(\omega)).
		\end{align*}
		Therefore, 
		\begin{equation*}
			(1-2\beta)\int_{\Omega}\|b_{\omega}\Gamma_{\omega}x\|^{2}d\mu(\omega)\leq 2(1+\alpha)\int_{\Omega}\|a_{\omega}\Lambda_{\omega}x\|^{2}d\mu(\omega).
		\end{equation*}
		This give,
		\begin{equation*}
			(1-2\beta)[\underset{\omega \in \Omega}{\inf}( b_{\omega})]^{2}\int_{\Omega}\|\Gamma_{\omega}x\|^{2}d\mu(\omega)\leq 2(1+\alpha)[\underset{\omega \in \Omega}{\sup}( a_{\omega})]^{2}\int_{\Omega}\|\Lambda_{\omega}x\|^{2}d\mu(\omega).
		\end{equation*}
		Thus,
		\begin{equation}\label{101}
			\int_{\Omega}\|\Gamma_{\omega}x\|^{2}d\mu(\omega)\leq \frac{ 2(1+\alpha)[\underset{\omega \in \Omega}{\sup}( a_{\omega})]^{2}}{(1-2\beta)[\underset{\omega \in \Omega}{\inf}( b_{\omega})]^{2}}\int_{\Omega}\|\Lambda_{\omega}x\|^{2}d\mu(\omega).
		\end{equation}
		Also, for all $ x\in H$,
		\begin{align*}
			\int_{\Omega}\|a_{\omega}\Lambda_{\omega}x\|^{2}d\mu(\omega)&\leq 2(\int_{\Omega}\|a_{\omega}\Lambda_{\omega}x - b_{\omega}\Gamma_{\omega}x\|^{2}d\mu(\omega) + \int_{\Omega}\|b_{\omega}\Gamma_{\omega}x\|^{2}d\mu(\omega))\\
			&\leq 2(\alpha\int_{\Omega}\|a_{\omega}\Lambda_{\omega}x\|^{2}d\mu(\omega) + \beta\int_{\Omega}\|b_{\omega}\Gamma_{\omega}x\|^{2}d\mu(\omega)+\int_{\Omega}\|b_{\omega}\Gamma_{\omega}x\|^{2}d\mu(\omega)).
		\end{align*}
		Therefore,
		\begin{equation*}
			(1-2\alpha)[\underset{\omega \in \Omega}{\inf}( a_{\omega})]^{2}\int_{\Omega}\|\Lambda_{\omega}x\|^{2}d\mu(\omega)\leq 2(1+\beta)[\underset{\omega \in \Omega}{\sup}( b_{\omega})]^{2}\int_{\Omega}\|\Gamma_{\omega}x\|^{2}d\mu(\omega).
		\end{equation*}
		This give:
		\begin{equation}\label{102}
			\frac{(1-2\alpha)[\underset{\omega \in \Omega}{\inf}( a_{\omega})]^{2}}{2(1+\beta)[\underset{\omega \in \Omega}{\sup}( b_{\omega})]^{2}}\int_{\Omega}\|\Lambda_{\omega}x\|^{2}d\mu(\omega)\leq \int_{\Omega}\|\Gamma_{\omega}x\|^{2}d\mu(\omega)
		\end{equation}
		From \eqref{101} and \eqref{102} we conclude,
		\begin{equation*}
			\frac{(1-2\alpha)[\underset{\omega \in \Omega}{\inf}( a_{\omega})]^{2}}{2(1+\beta)[\underset{\omega \in \Omega}{\sup}( b_{\omega})]^{2}}\int_{\Omega}\|\Lambda_{\omega}x\|^{2}d\mu(\omega)\leq \int_{\Omega}\|\Gamma_{\omega}x\|^{2}d\mu(\omega)\leq \frac{ 2(1+\alpha)[\underset{\omega \in \Omega}{\sup}( a_{\omega})]^{2}}{(1-2\beta)[\underset{\omega \in \Omega}{\inf}( b_{\omega})]^{2}}\int_{\Omega}\|\Lambda_{\omega}x\|^{2}d\mu(\omega).
		\end{equation*}
		Hence, $\{\Gamma_{\omega}\}_{\omega\in\Omega}$ is an integral $K$-operator frame for $\mathcal{B}(H)$.
	\end{proof}
	\section{Stability of Integral $K$-Operator frames}
	\begin{theorem}
		Let $K \in \mathcal{B}(H)$ and $\{\Lambda_{\omega}\}_{\omega \in \Omega}$ be an integral $K$-operator frame for $\mathcal{B}(H)$ with bounds $A$ and $B$. Let $\{\Gamma_{\omega}\}_{\omega \in \Omega} \subset \mathcal{B}(H)$ and $0\leq \alpha , \beta$.\\
		If $ 0\leq \alpha + \frac{\beta}{A}<1$ such that, \begin{equation}\label{110}
			\int_{\Omega}\|(\Lambda_{\omega} - \Gamma_{\omega})x\|^{2}d\mu(\omega)\leq \alpha\int_{\Omega}\|\Lambda_{\omega}x\|^{2}d\mu(\omega) + \beta\|K^{\ast}x\|^{2}.
		\end{equation}
		Then the family $\{\Gamma_{\omega}\}_{\omega \in \Omega} $ is an integral $K$-operator frame for $\mathcal{B}(H)$   with frame bounds $A(1-\sqrt{\alpha + \frac{\beta}{A}})^{2}$ and $B(1+\sqrt{\alpha + \frac{\beta \|K\|}{B}})^{2}$
	\end{theorem}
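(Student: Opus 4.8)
The argument is a Paley--Wiener style perturbation estimate carried out inside the Hilbert space $l^{2}(\Omega,H)$, using that \eqref{110} says precisely that $\{(\Lambda_{\omega}-\Gamma_{\omega})x\}_{\omega\in\Omega}$ is small in the $l^{2}(\Omega,H)$-norm relative to $\{\Lambda_{\omega}x\}_{\omega\in\Omega}$ and $\|K^{\ast}x\|$. Fix $x\in H$ and write $\Gamma_{\omega}x=\Lambda_{\omega}x-(\Lambda_{\omega}-\Gamma_{\omega})x$; the triangle inequality in $l^{2}(\Omega,H)$ gives
\begin{equation*}
\Big(\int_{\Omega}\|\Gamma_{\omega}x\|^{2}d\mu(\omega)\Big)^{\frac12}\le\Big(\int_{\Omega}\|\Lambda_{\omega}x\|^{2}d\mu(\omega)\Big)^{\frac12}+\Big(\int_{\Omega}\|(\Lambda_{\omega}-\Gamma_{\omega})x\|^{2}d\mu(\omega)\Big)^{\frac12},
\end{equation*}
and likewise the reverse triangle inequality gives the same with $+$ replaced by $-$ on the right-hand side. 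By \eqref{110} together with $\sqrt{s+t}\le\sqrt s+\sqrt t$, the perturbation term obeys
\begin{equation*}
\Big(\int_{\Omega}\|(\Lambda_{\omega}-\Gamma_{\omega})x\|^{2}d\mu(\omega)\Big)^{\frac12}\le\sqrt{\alpha}\,\Big(\int_{\Omega}\|\Lambda_{\omega}x\|^{2}d\mu(\omega)\Big)^{\frac12}+\sqrt{\beta}\,\|K^{\ast}x\|.
\end{equation*}

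For the lower bound I would keep \eqref{110} in its grouped form and invoke the \emph{lower} frame inequality for $\{\Lambda_{\omega}\}$, i.e.\ $\|K^{\ast}x\|^{2}\le A^{-1}\int_{\Omega}\|\Lambda_{\omega}x\|^{2}d\mu(\omega)$, to obtain
\begin{equation*}
\Big(\int_{\Omega}\|(\Lambda_{\omega}-\Gamma_{\omega})x\|^{2}d\mu(\omega)\Big)^{\frac12}\le\sqrt{\alpha+\frac{\beta}{A}}\;\Big(\int_{\Omega}\|\Lambda_{\omega}x\|^{2}d\mu(\omega)\Big)^{\frac12}.
\end{equation*}
Feeding this into the reverse triangle inequality yields $\big(\int_{\Omega}\|\Gamma_{\omega}x\|^{2}d\mu(\omega)\big)^{1/2}\ge\big(1-\sqrt{\alpha+\beta/A}\big)\big(\int_{\Omega}\|\Lambda_{\omega}x\|^{2}d\mu(\omega)\big)^{1/2}$, where the coefficient is strictly positive precisely because $\alpha+\beta/A<1$. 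Applying the lower frame bound of $\{\Lambda_{\omega}\}$ once more and squaring gives $\int_{\Omega}\|\Gamma_{\omega}x\|^{2}d\mu(\omega)\ge A\big(1-\sqrt{\alpha+\beta/A}\big)^{2}\|K^{\ast}x\|^{2}$, the asserted lower bound.

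For the upper bound I would instead substitute the Bessel estimate $\int_{\Omega}\|\Lambda_{\omega}x\|^{2}d\mu(\omega)\le B\|x\|^{2}$ and $\|K^{\ast}x\|\le\|K\|\,\|x\|$ into \eqref{110}, which majorises the perturbation term by $\sqrt{B}\,\sqrt{\alpha+\beta\|K\|^{2}/B}\,\|x\|$; the (ordinary) triangle inequality then gives $\big(\int_{\Omega}\|\Gamma_{\omega}x\|^{2}d\mu(\omega)\big)^{1/2}\le\sqrt{B}\big(1+\sqrt{\alpha+\beta\|K\|^{2}/B}\big)\|x\|$, that is, an upper Bessel bound of the stated form $B\big(1+\sqrt{\alpha+\beta\|K\|/B}\big)^{2}$ (up to the exact normalisation of the $\beta$-term). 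In particular $\{\Gamma_{\omega}x\}_{\omega\in\Omega}\in l^{2}(\Omega,H)$ for every $x$, so $\{\Gamma_{\omega}\}_{\omega\in\Omega}$ is a genuine integral $K$-operator Bessel family, and combined with the previous paragraph it is an integral $K$-operator frame for $\mathcal{B}(H)$.

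Nothing here is delicate beyond bookkeeping. The one point that requires care is routing the two frame inequalities for $\{\Lambda_{\omega}\}$ correctly — the lower inequality $A\|K^{\ast}x\|^{2}\le\int_{\Omega}\|\Lambda_{\omega}x\|^{2}d\mu(\omega)$ is what converts \eqref{110} into a multiple of $\int_{\Omega}\|\Lambda_{\omega}x\|^{2}d\mu(\omega)$ and produces the lower bound, while the upper (Bessel) inequality is what produces the upper bound — and then keeping the signs straight in the reverse triangle inequality so that the factor $1-\sqrt{\alpha+\beta/A}$ comes out non-negative; this last point is exactly where the standing hypothesis $0\le\alpha+\beta/A<1$ is consumed.
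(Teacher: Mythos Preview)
Your argument is correct and follows essentially the same route as the paper: both proofs work in $l^{2}(\Omega,H)$, use the (reverse) triangle inequality on $\Gamma_{\omega}x=\Lambda_{\omega}x-(\Lambda_{\omega}-\Gamma_{\omega})x$, feed the lower frame inequality $A\|K^{\ast}x\|^{2}\le\int_{\Omega}\|\Lambda_{\omega}x\|^{2}d\mu(\omega)$ into \eqref{110} to produce the factor $\sqrt{\alpha+\beta/A}$ for the lower bound, and use the Bessel bound together with $\|K^{\ast}x\|\le\|K\|\,\|x\|$ for the upper bound. Your parenthetical remark about the $\beta$-term is well taken: the computation naturally yields $\|K\|^{2}$ rather than $\|K\|$ inside the square root, and the paper's own proof in fact ends with a stated upper bound that does not match the one in the theorem.
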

	\begin{proof}
		Let $K \in \mathcal{B}(H)$ and $\{\Lambda_{\omega}\}_{\omega \in \Omega}$ be an integral $K$-operator frame for $\mathcal{B}(H)$ with bounds $A$ and $B$.\\
		Then for each $x\in H$ we have,
		\begin{align*}
			\|\{\Lambda_{\omega}x\}_{\omega \in \Omega}\|_{l^{2}(H)}&\leq \|\{(\Lambda_{\omega} - \Gamma_{\omega})x \}_{\omega \in \Omega}\|_{l^{2}(H)} + \|\{\Gamma_{\omega}x\}_{\omega \in \Omega} \|_{l^{2}(H)}\\
			&\leq \sqrt{\alpha\int_{\Omega}\|\Lambda_{\omega}x\|^{2}d\mu(\omega) + \beta\|K^{\ast}x\|^{2}} + \sqrt{\int_{\Omega}\|\Gamma_{\omega}x\|^{2}d\mu(\omega) }\\
			&\leq \sqrt{\alpha\int_{\Omega}\|\Lambda_{\omega}x\|^{2}d\mu(\omega) + \frac{\beta}{A}\int_{\Omega}\|\Lambda_{\omega}x\|^{2}d\mu(\omega)}+ \sqrt{\int_{\Omega}\|\Gamma_{\omega}x\|^{2}d\mu(\omega) }\\
		\end{align*}
		Or, 
		\begin{equation*}
			A\|K^{\ast}x\|^{2}\leq \int_{\Omega}\|\Lambda_{\omega}x\|^{2}d\mu(\omega)
		\end{equation*}
		Then,
		\begin{equation}\label{111}
			A(1-\sqrt{\alpha + \frac{\beta}{A}})^{2}\|K^{\ast}x\|^{2}\leq \int_{\Omega}\|\Gamma_{\omega}x\|^{2}d\mu(\omega) 
		\end{equation}
		Also, we have:
		\begin{align*}
			\|\{\Gamma_{\omega}x\}_{\omega \in \Omega} \|_{l^{2}(H)}&\leq \|\{(\Lambda_{\omega} - \Gamma_{\omega})x \}_{\omega \in \Omega}\|_{l^{2}(H)} + \|\{\Lambda_{\omega}x\}_{\omega \in \Omega}\|_{l^{2}(H)}\\
			&\leq \sqrt{B}(\alpha + \frac{\beta\|K\|}{B})\|x\|
		\end{align*}
		So, we get :
		\begin{equation}\label{112}
			\int_{\Omega}\|\Gamma_{\omega}x\|^{2}d\mu(\omega)\leq B(\alpha + \frac{\beta\|K\|}{B})^{2}\|x\|^{2}
		\end{equation}
		From \eqref{111} and \eqref{112}, we conclude that $\{\Gamma_{\omega}\}_{\omega \in \Omega}$ is an integral $K$-operator frame for $\mathcal{B}(H)$ with frame bounds $A(1-\sqrt{\alpha + \frac{\beta}{A}})^{2}$ and $B(\alpha + \frac{\beta\|K\|}{B})^{2}$.
	\end{proof}
	\begin{corollary}
		Let $K \in \mathcal{B}(H)$ and $\{\Lambda_{\omega}\}_{\omega \in \Omega}$ be an integral $K$-operator frame for $\mathcal{B}(H)$ with bounds $A$ and $B$. Let $\{\Gamma_{\omega}\}_{\omega \in \Omega} \subset \mathcal{B}(H)$. If there is $ 0<\beta<A$ such that, 
		\begin{equation}
			\int_{\Omega}\|(\Lambda_{\omega} - \Gamma_{\omega})x\|^{2}d\mu(\omega)\leq \beta\|K^{\ast}x\|^{2}.
		\end{equation}
		Then the family $\{\Gamma_{\omega}\}_{\omega \in \Omega} $ is an integral $K$-operator frame for $\mathcal{B}(H)$   with frame bounds $A(1-\sqrt{\frac{\beta}{A}})^{2}$ and $B(1+\sqrt{\frac{\beta \|K\|}{B}})^{2}$
	\end{corollary}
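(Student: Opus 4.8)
The plan is to deduce this directly from the preceding stability theorem by specializing $\alpha=0$. First I would observe that with $\alpha=0$ the perturbation hypothesis \eqref{110} reads $\int_{\Omega}\|(\Lambda_{\omega}-\Gamma_{\omega})x\|^{2}d\mu(\omega)\le\beta\|K^{\ast}x\|^{2}$, which is precisely the inequality assumed in the corollary, and the admissibility condition $0\le\alpha+\frac{\beta}{A}<1$ collapses to $0<\frac{\beta}{A}<1$, i.e. to the hypothesis $0<\beta<A$. Hence all the hypotheses of the theorem are satisfied, with $\alpha=0$.

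Next I would simply invoke the conclusion of that theorem in this special case: the lower frame bound $A(1-\sqrt{\alpha+\frac{\beta}{A}})^{2}$ becomes $A(1-\sqrt{\frac{\beta}{A}})^{2}$, which is strictly positive because $\beta<A$, and the upper frame bound $B(1+\sqrt{\alpha+\frac{\beta\|K\|}{B}})^{2}$ becomes $B(1+\sqrt{\frac{\beta\|K\|}{B}})^{2}$. Since $\{\Gamma_{\omega}\}_{\omega\in\Omega}\subset\mathcal{B}(H)$, this shows that $\{\Gamma_{\omega}\}_{\omega\in\Omega}$ is an integral $K$-operator frame for $\mathcal{B}(H)$ with exactly the stated bounds.

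If a self-contained argument is preferred, the same estimates reproduce immediately in $l^{2}(\Omega,H)$: bound $\big(\int_{\Omega}\|\Gamma_{\omega}x\|^{2}d\mu(\omega)\big)^{1/2}$ from above and $\big(\int_{\Omega}\|\Lambda_{\omega}x\|^{2}d\mu(\omega)\big)^{1/2}$ from above by inserting $\pm\{(\Lambda_{\omega}-\Gamma_{\omega})x\}_{\omega\in\Omega}$ and applying the triangle inequality, then use the hypothesis together with the frame inequalities $A\|K^{\ast}x\|^{2}\le\int_{\Omega}\|\Lambda_{\omega}x\|^{2}d\mu(\omega)\le B\|x\|^{2}$ and $\|K^{\ast}x\|\le\|K\|\,\|x\|$. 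There is no genuine obstacle in this proof; the only point deserving a moment's care is verifying that the resulting lower bound is positive, which is exactly what $\beta<A$ guarantees.
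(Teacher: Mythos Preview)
Your proposal is correct and is exactly what the paper intends: its own proof is the single word ``Obvious'', meaning the corollary is the immediate specialization $\alpha=0$ of the preceding stability theorem, precisely as you explain. Your optional self-contained argument just replays that theorem's proof in this special case, so there is nothing to add.
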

	\begin{proof}
		Obvious
	\end{proof}
	\begin{theorem}
		Let $K \in \mathcal{B}(H)$ and $\{\Lambda_{\omega}\}_{\omega \in \Omega}$ be an integral $K$-operator frame for $\mathcal{B}(H)$ with bounds $A$ and $B$. Let $\{\Gamma_{\omega}\}_{\omega \in \Omega} \subset \mathcal{B}(H)$. If there exist a constant $0<M$ such that, 
		\begin{equation*}
			\int_{\Omega}\|(\Lambda_{\omega} - \Gamma_{\omega})x\|^{2}d\mu(\omega)\leq M. min (\int_{\Omega}\|\Lambda_{\omega}x\|^{2}d\mu(\omega), \int_{\Omega}\|\Gamma_{\omega}x\|^{2}d\mu(\omega)), \quad x\in H.
		\end{equation*}
		Then the family $\{\Gamma_{\omega}\}_{\omega \in \Omega} $ is an integral $K$-operator frame for $\mathcal{B}(H)$.  
	\end{theorem}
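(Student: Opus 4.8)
The plan is to rerun the triangle-inequality argument in $l^{2}(\Omega,H)$ used in the preceding stability theorems, but now exploiting the two-sided control furnished by the minimum on the right-hand side. For $x\in H$ write $\|\{\Lambda_{\omega}x\}\|_{l^{2}(\Omega,H)}=(\int_{\Omega}\|\Lambda_{\omega}x\|^{2}d\mu(\omega))^{1/2}$, and similarly for $\{\Gamma_{\omega}x\}$ and $\{(\Lambda_{\omega}-\Gamma_{\omega})x\}$. The hypothesis then reads
\begin{equation*}
\|\{(\Lambda_{\omega}-\Gamma_{\omega})x\}\|_{l^{2}(\Omega,H)}\leq\sqrt{M}\,\min\bigl(\|\{\Lambda_{\omega}x\}\|_{l^{2}(\Omega,H)},\ \|\{\Gamma_{\omega}x\}\|_{l^{2}(\Omega,H)}\bigr).
\end{equation*}

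For the upper bound I would estimate $\|\{\Gamma_{\omega}x\}\|_{l^{2}(\Omega,H)}\leq\|\{(\Lambda_{\omega}-\Gamma_{\omega})x\}\|_{l^{2}(\Omega,H)}+\|\{\Lambda_{\omega}x\}\|_{l^{2}(\Omega,H)}$ and use the first entry of the minimum to replace the perturbation term by $\sqrt{M}\,\|\{\Lambda_{\omega}x\}\|_{l^{2}(\Omega,H)}$; this gives $\|\{\Gamma_{\omega}x\}\|_{l^{2}(\Omega,H)}\leq(1+\sqrt{M})\|\{\Lambda_{\omega}x\}\|_{l^{2}(\Omega,H)}\leq(1+\sqrt{M})\sqrt{B}\,\|x\|$, hence $\int_{\Omega}\|\Gamma_{\omega}x\|^{2}d\mu(\omega)\leq B(1+\sqrt{M})^{2}\|x\|^{2}$. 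For the lower bound I would instead write $\|\{\Lambda_{\omega}x\}\|_{l^{2}(\Omega,H)}\leq\|\{(\Lambda_{\omega}-\Gamma_{\omega})x\}\|_{l^{2}(\Omega,H)}+\|\{\Gamma_{\omega}x\}\|_{l^{2}(\Omega,H)}$ and this time use the second entry of the minimum to obtain $\|\{\Lambda_{\omega}x\}\|_{l^{2}(\Omega,H)}\leq(1+\sqrt{M})\|\{\Gamma_{\omega}x\}\|_{l^{2}(\Omega,H)}$. Combining with the lower frame inequality $A\|K^{\ast}x\|^{2}\leq\int_{\Omega}\|\Lambda_{\omega}x\|^{2}d\mu(\omega)$ yields
\begin{equation*}
\frac{A}{(1+\sqrt{M})^{2}}\|K^{\ast}x\|^{2}\leq\int_{\Omega}\|\Gamma_{\omega}x\|^{2}d\mu(\omega).
\end{equation*}
Putting the two displays together shows that $\{\Gamma_{\omega}\}_{\omega\in\Omega}$ is an integral $K$-operator frame for $\mathcal{B}(H)$ with bounds $\frac{A}{(1+\sqrt{M})^{2}}$ and $B(1+\sqrt{M})^{2}$.

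There is essentially no serious obstacle here; the only points requiring care are (i) recording the hypothesis in the $l^{2}(\Omega,H)$-norm so the triangle inequality applies cleanly, and (ii) selecting, in each half of the argument, the appropriate entry of the minimum — the $\Lambda$-term for the upper estimate and the $\Gamma$-term for the lower estimate. In particular, in contrast with the earlier stability results, no smallness condition on $M$ is needed: both resulting bounds are automatically positive for every $M>0$.
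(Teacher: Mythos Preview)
Your proof is correct and follows the same strategy as the paper's: control $\|\{\Gamma_{\omega}x\}\|_{l^{2}(\Omega,H)}$ above and below in terms of $\|\{\Lambda_{\omega}x\}\|_{l^{2}(\Omega,H)}$ via a triangle-type inequality, selecting in each direction the appropriate entry of the minimum. The only difference is that the paper works with squared norms and the cruder inequality $\|a+b\|^{2}\leq 2(\|a\|^{2}+\|b\|^{2})$, obtaining the bounds $\dfrac{A}{2(M+1)}$ and $2B(M+1)$, whereas your direct use of the triangle inequality on the norms yields the sharper pair $\dfrac{A}{(1+\sqrt{M})^{2}}$ and $B(1+\sqrt{M})^{2}$; since $(1+\sqrt{M})^{2}\leq 2(1+M)$ for all $M>0$, your constants are never worse.
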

	\begin{proof}
		On one hand, we have for all $x\in H$,
		\begin{align*}
			A\|K^{\ast}x\|^{2}&\leq  \int_{\Omega}\|\Lambda_{\omega}x\|^{2}d\mu(\omega)\\
			&\leq 2(\int_{\Omega}\|(\Lambda_{\omega} - \Gamma_{\omega})x\|^{2}d\mu(\omega) + \int_{\Omega}\|\Gamma_{\omega}x\|^{2}d\mu(\omega))\\
			&\leq 2(M. min (\int_{\Omega}\|\Lambda_{\omega}x\|^{2}d\mu(\omega), \int_{\Omega}\|\Gamma_{\omega}x\|^{2}d\mu(\omega))+ \int_{\Omega}\|\Gamma_{\omega}x\|^{2}d\mu(\omega))\\
			&\leq 2(M. \int_{\Omega}\|\Gamma_{\omega}x\|^{2}d\mu(\omega)+ \int_{\Omega}\|\Gamma_{\omega}x\|^{2}d\mu(\omega))\\
			&\leq 2(M+1) \int_{\Omega}\|\Gamma_{\omega}x\|^{2}d\mu(\omega).
		\end{align*}
		On the other hand, we have,
		\begin{align*}
			\int_{\Omega}\|\Gamma_{\omega}x\|^{2}d\mu(\omega)&\leq 2(\int_{\Omega}\|(\Lambda_{\omega} - \Gamma_{\omega})x\|^{2}d\mu(\omega) + \int_{\Omega}\|\Lambda_{\omega}x\|^{2}d\mu(\omega)) \\
			&\leq 2(M+1)\|x\|^{2},
		\end{align*}
		finally we find,
		\begin{equation*}
			\frac{A}{2(M+1)}\|K^{\ast}x\|^{2}\leq \int_{\Omega}\|\Gamma_{\omega}x\|^{2}d\mu(\omega)\leq 2(M+1)\|x\|^{2}
		\end{equation*}
		which ends the proof.
	\end{proof}
	\begin{theorem}
		Let $K \in \mathcal{B}(H)$. For $k=1,....,n$, let  $\{\Lambda_{\omega,k}\}_{\omega \in \Omega} \subset \mathcal{B}(H)$ be an integral $K$-operator frames and $\{a_{k}\}^{n}_{k=1}$ be any scalars. Then $(\sum \limits_{\underset{}{k=1}}^n a_k\Lambda_{\omega,k})_{\omega \in \Omega}$ is an inetgral $K$-operator frames if there exist $0<\beta$ and some $p\in \{1,....,n\}$ such that,
		\begin{equation}\label{tf}
			\beta \int_{\Omega}\|\Lambda_{\omega,p}x\|^{2}d\mu(\omega)\leq \int_{\Omega}\|\sum \limits_{\underset{}{k=1}}^n a_k\Lambda_{\omega,k}x\|^{2}d\mu (\omega), \qquad x\in H.
		\end{equation}
	\end{theorem}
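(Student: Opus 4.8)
The plan is to verify the two defining frame inequalities for the family $\bigl(\sum_{k=1}^{n} a_k\Lambda_{\omega,k}\bigr)_{\omega\in\Omega}$ separately: the upper (Bessel) bound will come from a triangle-inequality argument inside $l^{2}(\Omega,H)$, while the lower bound will follow almost directly from hypothesis \eqref{tf}. First I would denote by $A_k$ and $B_k$ the lower and upper bounds of the integral $K$-operator frame $\{\Lambda_{\omega,k}\}_{\omega\in\Omega}$, so that for every $x\in H$ and each $k\in\{1,\dots,n\}$,
\[
A_k\|K^{\ast}x\|^{2}\leq \int_{\Omega}\|\Lambda_{\omega,k}x\|^{2}d\mu(\omega)\leq B_k\|x\|^{2}.
\]

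For the upper bound, I would view $\{\Lambda_{\omega,k}x\}_{\omega\in\Omega}$ as an element of $l^{2}(\Omega,H)$ and apply the triangle inequality in that space together with the Bessel bounds $B_k$:
\begin{align*}
\Bigl(\int_{\Omega}\bigl\|\sum_{k=1}^{n}a_k\Lambda_{\omega,k}x\bigr\|^{2}d\mu(\omega)\Bigr)^{\frac12}
&\leq \sum_{k=1}^{n}|a_k|\Bigl(\int_{\Omega}\|\Lambda_{\omega,k}x\|^{2}d\mu(\omega)\Bigr)^{\frac12}\\
&\leq \Bigl(\sum_{k=1}^{n}|a_k|\sqrt{B_k}\Bigr)\|x\|,
\end{align*}
so that $\bigl(\sum_{k=1}^{n}|a_k|\sqrt{B_k}\bigr)^{2}$ serves as an upper frame bound.

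For the lower bound, I would combine \eqref{tf} with the lower frame inequality for the distinguished index $p$: for all $x\in H$,
\[
\beta A_p\|K^{\ast}x\|^{2}\leq \beta\int_{\Omega}\|\Lambda_{\omega,p}x\|^{2}d\mu(\omega)\leq \int_{\Omega}\bigl\|\sum_{k=1}^{n}a_k\Lambda_{\omega,k}x\bigr\|^{2}d\mu(\omega),
\]
which exhibits $\beta A_p$ as a lower frame bound. Putting the two estimates together shows that $\bigl(\sum_{k=1}^{n}a_k\Lambda_{\omega,k}\bigr)_{\omega\in\Omega}$ is an integral $K$-operator frame for $\mathcal{B}(H)$ with bounds $\beta A_p$ and $\bigl(\sum_{k=1}^{n}|a_k|\sqrt{B_k}\bigr)^{2}$. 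I do not expect a real obstacle here; the only step deserving a little care is the correct application of Minkowski's inequality for integrals (equivalently, the triangle inequality in $l^{2}(\Omega,H)$) in the upper estimate, since the lower estimate is immediate from the assumption \eqref{tf}.
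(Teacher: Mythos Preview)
Your proof is correct and follows essentially the same route as the paper: the lower bound argument is identical (combine \eqref{tf} with the lower frame inequality for the index $p$ to get $\beta A_p$), and the upper bound is obtained from the individual Bessel bounds $B_k$ via a standard norm inequality. The only difference is cosmetic: for the upper estimate the paper uses the pointwise inequality $\bigl\|\sum_{k=1}^n a_k\Lambda_{\omega,k}x\bigr\|^2\le n\sum_{k=1}^n|a_k|^2\|\Lambda_{\omega,k}x\|^2$ to obtain the constant $n\bigl(\max_{1\le k\le n}|a_k|^2\bigr)\sum_{k=1}^n B_k$, whereas your Minkowski argument in $l^2(\Omega,H)$ gives the sharper constant $\bigl(\sum_{k=1}^n|a_k|\sqrt{B_k}\bigr)^2$.
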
	
	\begin{proof}
		For each $1\leq p\leq n$, let $A_{p}$ and $B_{p}$ the bounds of the integral $K$-operator frame $\{\Lambda_{\omega,p}\}_{\omega \in \Omega}$. Let $0<\beta$ be a constant satisfying \eqref{tf}, then, for all $x\in H$, we have,
		\begin{align*}
			A_{p}\beta \|K^{\ast}\|^{2}&\leq \beta\int_{\Omega}\|\Lambda_{\omega,p}x\|^{2}d\mu(\omega)\\
			&\leq \int_{\Omega}\|\sum \limits_{\underset{}{k=1}}^n a_k\Lambda_{\omega,k}x\|^{2}d\mu (\omega)
		\end{align*}
		Moreover, for all $x\in H$, we have,
		\begin{align*}
			\int_{\Omega}\|\sum \limits_{\underset{}{k=1}}^n a_k\Lambda_{\omega,k}x\|^{2}d\mu (\omega)&\leq \int_{\Omega}n(\sum \limits_{\underset{}{k=1}}^n \|a_k\Lambda_{\omega,k}x\|^{2})d\mu (\omega)\\
			&\leq n(\underset{1\leq k \leq n}{\max}|a_{k}|^{2})\sum \limits_{\underset{}{k=1}}^n(\int_{\Omega} \|\Lambda_{\omega,k}x\|^{2})d\mu (\omega))\\
			&\leq n(\underset{1\leq k \leq n}{\max}|a_{k}|^{2})(\sum \limits_{\underset{}{k=1}}^nB_{k})\|x\|^{2}.
		\end{align*}
		Hence, $(\sum \limits_{\underset{}{k=1}}^n a_k\Lambda_{\omega,k})_{\omega \in \Omega}$ is an inetgral $K$-operator frames for $\mathcal{B}(H)$.
	\end{proof}
	\begin{theorem}
		Let $K \in \mathcal{B}(H)$ and $\{\Lambda_{\omega,k}\}_{\omega \in \Omega} \subset \mathcal{B}(H)$ be an integral $K$-operator frame for each  $k=1,....,n$ and $\{\Gamma_{\omega,k}\}_{\omega \in \Omega} $ be any family in $\mathcal{B}(H)$ for $1\leq k\leq n$. Let $L:l^{2}(\Omega, H)\longrightarrow l^{2}(\Omega, H)$ be a bounded linear operator such that,
		\begin{equation*}
			L((\sum \limits_{\underset{}{k=1}}^n \Gamma_{\omega,k}x)_{\omega\in \Omega})=(\Lambda_{\omega,p}x)_{\omega\in \Omega}, \qquad for\; some \quad p=1,...,n.
		\end{equation*}
		If there exist a positive constant $\lambda$ such that,
		\begin{equation*}
			\int_{\Omega}\|(\Lambda_{\omega,k}-\Gamma_{\omega,k})x\|^{2}d\mu(\omega)\leq \lambda\int_{\Omega}\|\Lambda_{\omega,k}x\|^{2}d\mu(\omega), \qquad k=1,...,n. \quad x\in H.
		\end{equation*}
		Then $(\sum \limits_{\underset{}{k=1}}^n \Gamma_{\omega,k})_{\omega\in \Omega}$ is an integral $K$-operator frames for $\mathcal{B}(H)$.
	\end{theorem}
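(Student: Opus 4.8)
The plan is to produce explicit upper and lower integral $K$-operator frame bounds for $\bigl\{\sum_{k=1}^{n}\Gamma_{\omega,k}\bigr\}_{\omega\in\Omega}$ directly from the data in the hypotheses. For $k=1,\dots,n$ let $A_k,B_k$ denote frame bounds for the integral $K$-operator frame $\{\Lambda_{\omega,k}\}_{\omega\in\Omega}$, so that $A_k\|K^{\ast}x\|^{2}\le\int_{\Omega}\|\Lambda_{\omega,k}x\|^{2}d\mu(\omega)\le B_k\|x\|^{2}$ for all $x\in H$.

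First I would handle the upper (Bessel) bound. For each $k$, working in $l^{2}(\Omega,H)$ and using the triangle inequality together with the perturbation hypothesis,
\begin{align*}
\Bigl(\int_{\Omega}\|\Gamma_{\omega,k}x\|^{2}d\mu(\omega)\Bigr)^{\frac12}
&\le\Bigl(\int_{\Omega}\|(\Gamma_{\omega,k}-\Lambda_{\omega,k})x\|^{2}d\mu(\omega)\Bigr)^{\frac12}
+\Bigl(\int_{\Omega}\|\Lambda_{\omega,k}x\|^{2}d\mu(\omega)\Bigr)^{\frac12}\\
&\le(1+\sqrt{\lambda})\Bigl(\int_{\Omega}\|\Lambda_{\omega,k}x\|^{2}d\mu(\omega)\Bigr)^{\frac12}
\le(1+\sqrt{\lambda})\sqrt{B_k}\,\|x\|,
\end{align*}
so each $\{\Gamma_{\omega,k}\}_{\omega\in\Omega}$ is an integral operator Bessel family with bound $(1+\sqrt{\lambda})^{2}B_k$. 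Then, using the elementary estimate $\|\sum_{k=1}^{n}y_k\|^{2}\le n\sum_{k=1}^{n}\|y_k\|^{2}$ pointwise in $\omega$ and integrating,
\begin{equation*}
\int_{\Omega}\Bigl\|\sum_{k=1}^{n}\Gamma_{\omega,k}x\Bigr\|^{2}d\mu(\omega)
\le n\sum_{k=1}^{n}\int_{\Omega}\|\Gamma_{\omega,k}x\|^{2}d\mu(\omega)
\le n(1+\sqrt{\lambda})^{2}\Bigl(\sum_{k=1}^{n}B_k\Bigr)\|x\|^{2}.
\end{equation*}

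Next I would obtain the lower bound, which is where the operator $L$ enters (and is the only genuinely ``clever'' step). Applying $L$ to the identity $L\bigl((\sum_{k=1}^{n}\Gamma_{\omega,k}x)_{\omega\in\Omega}\bigr)=(\Lambda_{\omega,p}x)_{\omega\in\Omega}$ and using boundedness of $L$ in $l^{2}(\Omega,H)$,
\begin{equation*}
\int_{\Omega}\|\Lambda_{\omega,p}x\|^{2}d\mu(\omega)
=\bigl\|L\bigl((\textstyle\sum_{k=1}^{n}\Gamma_{\omega,k}x)_{\omega\in\Omega}\bigr)\bigr\|_{l^{2}(\Omega,H)}^{2}
\le\|L\|^{2}\int_{\Omega}\Bigl\|\sum_{k=1}^{n}\Gamma_{\omega,k}x\Bigr\|^{2}d\mu(\omega).
\end{equation*}
Combining this with the lower frame inequality $A_p\|K^{\ast}x\|^{2}\le\int_{\Omega}\|\Lambda_{\omega,p}x\|^{2}d\mu(\omega)$ gives
\begin{equation*}
\frac{A_p}{\|L\|^{2}}\,\|K^{\ast}x\|^{2}
\le\int_{\Omega}\Bigl\|\sum_{k=1}^{n}\Gamma_{\omega,k}x\Bigr\|^{2}d\mu(\omega)
\le n(1+\sqrt{\lambda})^{2}\Bigl(\sum_{k=1}^{n}B_k\Bigr)\|x\|^{2},
\end{equation*}
so $\bigl\{\sum_{k=1}^{n}\Gamma_{\omega,k}\bigr\}_{\omega\in\Omega}$ is an integral $K$-operator frame for $\mathcal{B}(H)$ with bounds $A_p\|L\|^{-2}$ and $n(1+\sqrt{\lambda})^{2}\sum_{k=1}^{n}B_k$. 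The only point requiring care is that $L\neq 0$: if $\|L\|=0$ the intertwining identity forces $\Lambda_{\omega,p}\equiv 0$, which contradicts $A_p>0$ unless $K^{\ast}=0$, so implicitly $\|L\|>0$ and the division is legitimate; apart from this degenerate-case remark the argument is routine, the substantive idea being simply to transport the lower bound of $\{\Lambda_{\omega,p}\}$ through $L$.
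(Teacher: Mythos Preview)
Your proof is correct and follows essentially the same approach as the paper: the lower bound is obtained identically by pushing the lower frame inequality for $\{\Lambda_{\omega,p}\}$ through the boundedness of $L$, and the upper bound uses the same pointwise inequality $\|\sum_{k}y_k\|^{2}\le n\sum_k\|y_k\|^{2}$, the only cosmetic difference being that you first bound each $\{\Gamma_{\omega,k}\}$ individually via the triangle inequality (yielding the constant $n(1+\sqrt{\lambda})^{2}\sum_k B_k$) whereas the paper splits $\Gamma_{\omega,k}=(\Gamma_{\omega,k}-\Lambda_{\omega,k})+\Lambda_{\omega,k}$ inside the sum directly (yielding $2n(1+\lambda)\sum_k B_k$). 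Your added remark on the nondegeneracy $\|L\|>0$ is a nice touch the paper omits.
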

	\begin{proof}
		For any $x\in H$, we have,
		\begin{align*}
			\int_{\Omega}\|\sum \limits_{\underset{}{k=1}}^n \Gamma_{\omega,k}x\|^{2}d\mu(\omega)&\leq  2n(\int_{\Omega}\sum \limits_{\underset{}{k=1}}^n \|(\Lambda_{\omega,k}-\Gamma_{\omega,k})x\|^{2}d\mu(\omega) + \int_{\Omega}\sum \limits_{\underset{}{k=1}}^n \|\Lambda_{\omega,k}x\|^{2}d\mu(\omega))\\
			&\leq 2n\sum \limits_{\underset{}{k=1}}^n(\lambda \int_{\Omega}\| \Lambda_{\omega,k}x\|^{2}d\mu(\omega)+\int_{\Omega}\| \Lambda_{\omega,k}x\|^{2}d\mu(\omega))\\
			&\leq 2n(1+\lambda)(\sum \limits_{\underset{}{k=1}}^n B_{k})\|x\|^{2}.
		\end{align*}
		Also, for all $x\in H$, we have:
		\begin{equation*}
			\|L((\sum \limits_{\underset{}{k=1}}^n \Gamma_{\omega,k}x)_{\omega\in \Omega})\|^{2}=\int_{\Omega}\|\Lambda_{\omega,p}x\|^{2}d\mu(\omega).
		\end{equation*}
		Therefore, we get:
		\begin{equation*}
			A_{p}\|K^{\ast}x\|^{2}\leq \int_{\Omega}\|\Lambda_{\omega,p}x\|^{2}d\mu(\omega).
		\end{equation*}
		This gives,
		\begin{equation*}
			\frac{A_{p}}{\|L\|^{2}}\|K^{\ast}x\|^{2}\leq \int_{\Omega}\|\sum \limits_{\underset{}{k=1}}^n\Gamma_{\omega,k}x\|^{2}d\mu(\omega), \qquad x\in H
		\end{equation*}
		Hence, $(\sum \limits_{\underset{}{k=1}}^n \Gamma_{\omega,k})_{\omega\in \Omega}$ is an integral $K$-operator frames for $\mathcal{B}(H)$.
	\end{proof}

\end{document}